\newtheorem{theorem}{Theorem}[subsection]
\newtheorem{lemma}[theorem]{Lemma}
\newtheorem{proposition}[theorem]{Proposition}
\newtheorem{corollary}[theorem]{Corollary}
\newtheorem{theorem1}{Theorem}[section]
\newtheorem{corollary1}[theorem1]{Corollary}
\theoremstyle{definition}
\newtheorem{definition}[theorem]{Definition}
\newtheorem{remark}[theorem]{Remark}
\newtheorem{example}[theorem]{Example}
\newtheorem{remark1}[theorem1]{Remark}
\newtheorem{example1}[theorem1]{Example}
\newcommand\Iso{\operatorname{Iso}}
\newcommand\op{\operatorname{op}}
\newcommand\Set{\operatorname{\mathcal{S}et}}
\newcommand\Gra{\operatorname{\bf Gra}}
\newcommand\SSet{\operatorname{\mathcal{SS}et}}
\newcommand\C{\operatorname{\bf C}}
\newcommand\Ho{\operatorname{Ho}}
\newcommand\colim{\operatorname{colim}}
\newcommand\hocolim{\operatorname{hocolim}}
\newcommand\ca{\mathcal {A}}
\newcommand\cb{\mathcal {B}}
\newcommand\cc{\mathcal {C}}
\newcommand\cd{\mathcal {D}}
\newcommand\cu{\mathcal {U}}
\newcommand\ci{\mathcal {I}}
\newcommand\cj{\mathcal {J}}
\newcommand\ck{\mathcal {K}}
\newcommand\cn{\mathcal {N}}
\newcommand\cl{\mathcal {L}}
\newcommand\cm{\mathcal {M}}
\newcommand\crr{\mathcal {R}}
\newcommand\cw{\mathcal {W}}
\newcommand\cx{\mathcal {X}}
\begin{document}
\title{Small presentations of model categories and Vop\v{e}nka's principle}
\author{G. Raptis}                
\author{J. Rosick\'{y}}
       
\address{
\newline G. Raptis \newline
Fakult\"{a}t f\"{u}r Mathematik, Universit\"{a}t Regensburg, 93040 Regensburg, Germany}
\email{georgios.raptis@ur.de}

\address{
\newline J. Rosick\'{y}\newline
Department of Mathematics and Statistics, Faculty of Sciences, Masaryk University  \newline
Kotl\'{a}\v{r}sk\'{a} 2, 611 37 Brno, Czech Republic}
\email{rosicky@math.muni.cz}

\keywords{Cofibrantly generated model category, combinatorial model category, simplicial presheaves, Vop\v{e}nka's principle.}
\subjclass[2010]{18G55, 18C35, 55U35.}

\begin{abstract}
We prove existence results for small presentations of model categories generalizing a theorem
of D. Dugger from combinatorial model categories to more general model categories. Some of these    
results are shown under the assumption of Vop\v{e}nka's principle. Our main theorem applies in particular to 
cofibrantly generated model categories where the domains of the generating cofibrations satisfy a slightly stronger 
smallness condition. As a consequence, assuming Vop\v{e}nka's principle, such a cofibrantly generated model category 
is Quillen equivalent to a combinatorial model category. Moreover, if there are generating sets which consist of presentable 
objects, then the same conclusion holds without the assumption of Vop\v{e}nka's principle. We also correct a mistake from 
previous work that made similar claims. 
\end{abstract}

\maketitle
 
\section{Introduction}

The authors addressed in previous articles \cite{Ra, R2} the question of whether a cofibrantly generated model category is Quillen equivalent to a combinatorial model category 
and connected this question with special set-theoretical assumptions. More specifically, the first author claimed in \cite{Ra} that assuming Vop\v enka's principle any cofibrantly 
generated model category is Quillen equivalent to a combinatorial one. The proof appeared to generalize also to a weaker notion of a cofibrantly generated model category where 
the domains of the generating sets need not be small. The second author claimed in \cite{R2} that Vop\v{e}nka's principle is equivalent to the statement that such a generalized 
cofibrantly generated model category admits a combinatorial model up to Quillen equivalence. Unfortunately, the proof in \cite{Ra} contains a mistake which cannot be repaired 
without adding further assumptions. 

The purpose of the present article is to point out this mistake and present a different approach to the problem of finding combinatorial models for cofibrantly generated model 
categories. Our main theorem establishes a slightly weaker version of the claim in \cite{Ra} but it applies to more general model categories as well. Moreover, we show that there 
are examples of cofibrantly generated model categories in the generalized sense of \cite{R2}, which do not admit a combinatorial model, independently of Vop\v{e}nka's principle. 

Our main results are based on a delicate analysis of the properties of weak factorization systems that are generated by sets of morphisms and which satisfy various kinds of smallness conditions. We introduce a hierarchy of smallness conditions, as well as associated notions of cofibrantly generated model categories, and investigate their implications for the 
categories of cofibrant objects and for the classes of weak equivalences. Recall that a weak factorization system $(\cl,\crr)$ in a cocomplete category $\cm$ is cofibrantly generated if 
there is a set of morphisms $\cx \subseteq \cl$ such that 
\begin{enumerate}
\item[(a)] the domains of $\cx$ are small relative to $\cx$,
\item[(b)] $\cx^\square=\crr$.
\end{enumerate}
This corresponds to the standard notion that appears in the definition of a cofibrantly generated model category. We introduce the following variants of this definition (see Definition 
\ref{many-def} for a more precise definition and a more complete list):
\begin{enumerate} 
 \item[(1)] $(\cl, \crr)$ is \emph{weakly} cofibrantly generated if there is a set $\cx \subseteq \cl$ such that $\cl = \mathrm{cof}(\cx)$ where $\mathrm{cof}(\cx)$ denotes the smallest 
 class of morphisms in $\cm$ which contains $\cx$ and is closed under pushouts, transfinite compositions, and retracts. 
 \item[(2)] $(\cl, \crr)$ is \emph{strictly} cofibrantly generated if there is a set $\cx \subseteq \cl$ that satisfies (b) and the domains of the morphisms in $\cx$ are small with 
 respect to $\lambda$-directed good colimits whose links are $\cx$-cellular morphisms. 
 \item[(3)] $(\cl, \crr)$ is \emph{perfectly} cofibrantly generated if there is a set $\cx \subseteq \cl$ that satisfies (b) and the domains and the codomains of the morphisms in 
 $\cx$ are presentable objects in $\cm$. 
\end{enumerate}
The notion of a good colimit will be recalled in Section 2. The smallness condition in (2) appears to be stronger than the one in 
(a) above, at least when the corresponding smallness rank is 
uncountable. Roughly speaking, while the smallness condition in (a) is concerned with directed chains of $\cx$-cellular morphisms, the one in (2) applies also to wider directed diagrams of $\cx$-cellular 
morphisms. This seems to be a natural variant of the standard definition from the viewpoint of the fat small object argument \cite{MRV}. 

\medskip 

Model categories were first introduced and studied by Quillen in his seminal work \cite{Qu}. Following the standard conventions from the more recent treatments of the subject \cite{Hi, Ho}, a model category consists of a complete and cocomplete category $\cm$ together with three classes of morphisms $\mathcal{C}of$, $\cw$ and $\mathcal{F}ib$ such that $\cw$ contains the isomorphisms and has the 2-out-of-3
property and the two pairs $(\mathcal{C}of \cap \cw, \mathcal{F}ib)$ and $(\mathcal{C}of, \cw \cap \mathcal{F}ib)$ define weak factorization systems with functorial factorizations. A model
category $(\cm, \mathcal{C}of, \cw, \mathcal{F}ib)$ is cofibrantly generated if both weak factorization systems are cofibrantly generated. Moreover, $(\cm, \mathcal{C}of, \cw, \mathcal{F}ib)$ is a combinatorial model category if it is cofibrantly generated and its underlying 
category $\cm$ is locally presentable. Examples of combinatorial model 
categories include the standard model category of simplicial sets $\SSet$, the projective model category of simplicial presheaves $\SSet^{\mathcal{C}^{\op}}$ where $\mathcal{C}$ is a small category, and the left Bousfield localization $L_S \SSet^{\mathcal{C}^{\op}}$
of $\SSet^{\mathcal{C}^{\op}}$ at a set of morphisms $S$. 

A small presentation of a model category $(\cm, \mathcal{C}of, \cw, \mathcal{F}ib)$ consists of a small category $\mathcal{C}$, a set of morphisms $S$ in $\SSet^{\mathcal{C}^{\op}}$ and a Quillen equivalence 
$$F \colon L_S \SSet^{\mathcal{C}^{\op}} \rightleftarrows \cm \colon G.$$
This notion was introduced by Dugger \cite{D1} who showed that every combinatorial model category admits a small presentation. In this paper, we extend this result to more general model categories and in this way we conclude that these model categories admit combinatorial models. Our main results for the existence of small presentations apply under the 
assumption that $(\mathcal{C}of, \cw \cap \mathcal{F}ib)$ is \emph{strictly} cofibrantly generated. 
We do not know an example of a cofibrantly generated model category for which $(\mathcal{C}of, \cw \cap \mathcal{F}ib)$ fails to be strictly cofibrantly generated - such an example cannot be finitely generated. 

More specifically, in Section \ref{section4}, we obtain the following results (see Theorem \ref{main}). Let $(\cm, \mathcal{C}of, \cw, \mathcal{F}ib)$ be a model category. 
\begin{itemize}
 \item[(A)] Suppose that $(\mathcal{C}of, \cw \cap \mathcal{F}ib)$ is strictly cofibrantly generated. Then, assuming Vop\v{e}nka's principle, $\cm$ admits a small presentation. Therefore 
 $\cm$ is Quillen equivalent to a combinatorial model category. 
 \item[(B)] Suppose that both weak factorization systems of $\cm$ are perfectly cofibrantly generated. Then $\cm$ admits a small presentation. Therefore $\cm$ is Quillen equivalent to a 
 combinatorial model category.
\end{itemize}
On the other hand,
\begin{itemize}
 \item[(C)] There are model categories whose weak factorization systems are weakly cofibrantly generated but they are not Quillen equivalent to a combinatorial model category. Moreover, 
 even the homotopy category of such a model category is not equivalent to the homotopy category of a combinatorial model category in general. 
\end{itemize}
The method we use to prove (A) and (B) follows Dugger's method \cite{D1} for proving that a combinatorial model category has a small presentation. Based on the preliminary results about cofibrant objects and 
weak equivalences in Section 3, we show that this method can be applied to more general model categories - assuming Vop\v{e}nka's principle or not. Concerning the role of the large cardinal axiom in Vop\v{e}nka's principle, it is worth mentioning that this is used through several of its non-trivially equivalent forms or non-trivial implications and it is required at many steps in the proof of (A). 

The results in this paper help to clarify the situation regarding the comparison between cofibrantly generated and combinatorial model categories, but aspects of this comparison and its precise 
set-theoretical status remain open. For example, we do not know if there is an example of a cofibrantly generated model category $(\cm, \mathcal{C}of, \cw, \mathcal{F}ib)$ which does not have a small presentation. Moreover, we do not know if assuming the negation of Vop\v{e}nka's principle, there is such an example for which $(\mathcal{C}of, \cw \cap \mathcal{F}ib)$ is strictly cofibrantly generated.  

\medskip

The paper is organized as follows. Section 2 is concerned with the general properties of weak factorization systems: there we introduce and compare different notions of cofibrant 
generation. Section 3 collects some useful properties  of strictly cofibrantly generated weak factorization systems. In particular, we prove that the category of cofibrant objects 
has a small (colimit-)dense subcategory (see Theorem \ref{swfs-general case}) and explore the consequences of this property. Moreover, we give general 
conditions on a model category for the class of weak equivalences (between cofibrant objects) to be closed under large enough filtered colimits (Proposition \ref{weak-eq}). In Section 4, we first present a short review of Dugger's method \cite{D1} for the 
construction of small presentations as it applies to more general model categories. Then we deduce our main results 
about the existence of small presentations (Theorem \ref{main} and Corollary \ref{main2}). Lastly, in Subsection \ref{counterexample}, we discuss an example of a weakly cofibrantly 
generated model category whose homotopy category is not equivalent to the homotopy category of a combinatorial model category. Finally, in Section 5, we discuss the mistake in the argument of \cite{Ra} and correct it under additional assumptions.

\

\noindent \textit{Acknowledgements.} The first author was partially supported by SFB 1085 - \emph{Higher Invariants}, University of Regensburg. The second author 
was supported by the Grant Agency of the Czech Republic under the grant P201/12/G028.

\section{Generating Classes of Cofibrations}

In this section we review the standard definition of a cofibrantly generated weak factorization system and introduce some variations of this notion. We are mainly interested in a slightly stronger notion 
of a weak factorization system where the domains of the generating cofibrations satisfy a stronger smallness condition. This condition will be required in the proofs of our main 
results but it may also be of independent interest. It is suggested by the fat small object argument \cite{MRV} in the same way that the standard smallness condition
arises naturally from Quillen's small object argument. 

\subsection{Small objects}

\medskip

Let $\cm$ be a cocomplete category and $\cx$ a class of morphisms. The class $\mathrm{cell}(\cx)$ of $\cx$-\textit{cellular morphisms} is the smallest class of morphisms 
which contains $\cx$ and is closed under transfinite compositions and pushouts along morphisms in $\cx$. The class $\mathrm{cof}(\cx)$ of $\cx$-\textit{cofibrations} is 
the smallest class of morphisms which is in addition closed under retracts. Let $\cx^{\square}$ (resp. ${}^\square\cx$) denote the class of morphisms which have the right 
lifting property (resp. left lifting property) 
with respect to each morphism in $\cx$. It is easy to see that every $\cx$-cofibration has the left lifting property with respect to $\cx^{\square}$, that is,
$\mathrm{cof}(\cx) \subseteq {}^{\square}(\cx^{\square})$. 

\medskip

Given a poset $P$ and $x\in P$, $\downarrow x=\{y\in P\mid y\leq x\}$ denotes the \emph{initial segment}
associated with $x$. An element $x$ of a poset $P$ is 
\textit{isolated} if the subposet
$$
\downdownarrows x=\{y\in P\mid y< x\}
$$ 
has a greatest element $x^-$ which is called the \textit{predecessor} of $x$. A non-isolated element, which is not a least element, is called a \textit{limit element}. 
For a poset $P$, a diagram $D\colon P\to\cm$ is called \textit{smooth} if for every limit element $x\in P$, the diagram 
$$(D(y \to x)\colon Dy \to Dx)_{y<x}$$ 
defines a colimit cocone of the restriction of $D$ to the subposet $\downdownarrows x$.

A poset $P$ is \emph{well-founded} if each nonempty subset of $P$ contains a minimal element. A poset $P$ is called \textit{good} if it is well-founded and has a least element $\perp$. 
Note that every nonempty well-ordered set is good. For a regular cardinal $\lambda$, we say that a good poset $P$ is $\lambda$-\textit{good} if the initial segment 
$\downarrow x$ has cardinality $<\lambda$ for each $x \in P$.  A \textit{good} diagram $D\colon P\to\cm$ is a smooth 
diagram where $P$ is a good poset. Such a diagram is essentially determined by $D(\perp)$ and the morphisms $D(x^- \to x)$ 
for all isolated elements $x \in P$. These morphisms $D(x^- \to x): D(x^-) \to D(x)$ are the \textit{links} of the diagram $D$. The \emph{composite} of a good diagram $D : P \to \cm$ is the canonical morphism $D(\perp) \to \colim_P D$. 

\medskip 

An object $X \in \cm$ is $\lambda$-\textit{small relative to} $\cx$ if $\cm(X,-)$ preserves colimits of smooth $\lambda$-directed chains of $\cx$-cellular morphisms. 
We say that $X$ is \textit{small relative to} $\cx$ if it is $\lambda$-small relative to $\cx$ for some regular cardinal $\lambda$. This is the standard notion of 
smallness that appears in the definition of cofibrantly generated model categories and it is suggested by the small object argument. This argument was originally used 
by Quillen in \cite{Qu} and it was later formalized in the following way.  

\begin{theorem}[Small object argument]
Let $\cm$ be a cocomplete category and $\cx$ a set of morphisms. Suppose that the domains of the morphisms in $\cx$ are small relative to $\cx$. Then every morphism $f$ in $\cm$ admits 
a functorial factoriation $f= p i$ into a $\cx$-cellular morphism $i \in \mathrm{cell}(\cx)$ followed by a morphism $p \in \cx^{\square}$.
\end{theorem}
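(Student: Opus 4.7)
The plan is to carry out the classical transfinite small object argument. Since $\cx$ is a set, I can choose a single regular cardinal $\lambda$ such that every domain of a morphism in $\cx$ is $\lambda$-small relative to $\cx$. Given a morphism $f \colon A \to B$, I will construct, by transfinite induction on $\beta \le \lambda$, a smooth chain of morphisms $X_\beta$ starting at $X_0 = A$, together with a compatible family of morphisms $p_\beta \colon X_\beta \to B$ such that $p_0 = f$. At a successor stage, form the (small) set $S_\beta$ of all commutative squares consisting of a morphism $g \colon C \to D$ in $\cx$, a morphism $C \to X_\beta$, and a morphism $D \to B$ which together with $p_\beta$ form a commutative square. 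Define $X_{\beta+1}$ by the pushout
\[
\xymatrix{
\coprod_{S_\beta} C \ar[r] \ar[d]_{\coprod g} & X_\beta \ar[d] \\
\coprod_{S_\beta} D \ar[r] & X_{\beta+1},
}
\]
so that $X_\beta \to X_{\beta+1}$ is a pushout of a coproduct of morphisms in $\cx$, hence belongs to $\mathrm{cell}(\cx)$; the map $p_{\beta+1} \colon X_{\beta+1} \to B$ is induced by the universal property. At limit stages $\beta \le \lambda$, set $X_\beta = \colim_{\alpha < \beta} X_\alpha$ with $p_\beta$ again induced. Finally let $i \colon A \to X_\lambda$ and $p := p_\lambda \colon X_\lambda \to B$.

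By construction $i \in \mathrm{cell}(\cx)$. To verify $p \in \cx^{\square}$, consider a lifting problem
\[
\xymatrix{
C \ar[r]^-{u} \ar[d]_{g} & X_\lambda \ar[d]^{p} \\
D \ar[r]_-{v} & B
}
\]
with $g \in \cx$. The underlying chain $(X_\alpha)_{\alpha < \lambda}$ is a smooth $\lambda$-directed chain of $\cx$-cellular morphisms, so by $\lambda$-smallness of $C$ relative to $\cx$, the morphism $u$ factors as $C \to X_\beta \to X_\lambda$ for some $\beta < \lambda$. The resulting square over $p_\beta$ and $v$ is an element of $S_\beta$, and the pushout defining $X_{\beta+1}$ supplies a morphism $D \to X_{\beta+1}$ which, composed with $X_{\beta+1} \to X_\lambda$, provides the desired lift. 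Functoriality of the entire construction is clear, since both $S_\beta$ and all pushouts and colimits involved are functorial in the morphism $f$.

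There is no serious obstacle here; the argument is essentially bookkeeping once $\lambda$ is chosen. The only mild subtlety is the uniform choice of $\lambda$ across all domains of morphisms in $\cx$, which uses that $\cx$ is a set, together with the observation that at each successor stage the map $X_\beta \to X_{\beta+1}$ being $\cx$-cellular ensures the entire chain remains of the form to which the smallness condition applies. One must also be careful that the coproducts indexed by $S_\beta$ exist, which follows from cocompleteness of $\cm$ and the fact that $S_\beta$ is a set (as $\cx$ is a set and $\cm(C, X_\beta) \times \cm(D, B)$ is a set).
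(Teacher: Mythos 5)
Your proposal is correct and is exactly the classical transfinite small object argument that the paper invokes by citing \cite[Theorem 2.1.14]{Ho}; the uniform choice of a regular cardinal $\lambda$, the pushout-of-coproducts successor step, and the factorization of the test map $C \to X_\lambda$ through some stage $X_\beta$ all match the cited proof. No issues.
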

\begin{proof}
See \cite[Theorem 2.1.14]{Ho}. 
\end{proof}

\begin{remark}
If the domains of the morphisms in (the set) $\cx$ are small relative to $\cx$, then the small object argument together with the retract argument imply that
$\mathrm{cof}(\cx) = {}^{\square}(\cx^{\square})$ (see \cite[2.1.15]{Ho}). 
\end{remark}

We will consider the following variation of the notion of smallness. 

\begin{definition} \label{strictly-small-def}
Let $\cm$ be a category, $\cx$ a set of morphisms, and $\lambda$ a regular cardinal. An object $X$ in $\cm$ is called 
\textit{strictly $\lambda$-small relative to} $\cx$ if 
$\cm(X,-)$ preserves $\lambda$-directed good colimits whose links are $\cx$-cellular morphisms. We say that $X$ is \textit{strictly small relative to} $\cx$ if it is 
strictly $\lambda$-small for some regular cardinal $\lambda$. 
\end{definition}

\begin{remark}
In Definition \ref{strictly-small-def}, it is possible to restrict to $\lambda$-directed good colimits whose links are obtained as pushouts of morphisms in $\cx$ as this 
leads to an equivalent notion. The equivalence can be demonstrated by replacing the $\cx$-cellular links of a diagram by directed chains whose links are pushouts along 
morphisms in $\cx$. An analogous  statement is also true and well known for the definition of small objects. 
\end{remark}

We note that the composite of a good diagram whose links are $\cx$-cellular morphisms has the left lifting property with respect to $\cx^{\square}$ (see the proof of 
\cite[Proposition 4.4]{MRV} or \cite[Lemma A.1.5.6]{Lu}). A strictly $\lambda$-small object is clearly $\lambda$-small. Therefore, a set of morphisms $\cx$ whose domains are strictly small with respect to $\cx$ permits the small object argument. 
Moreover, it also permits the fat small object argument \cite{MRV} as stated below (see also \cite[A.1.5]{Lu}). 

We first recall some notation from \cite{MRV}. Let $\mathrm{Po}(\cx)$ denote the class of morphisms which are obtained as pushouts along a morphism in $\cx$. Let 
$\mathrm{GdDirPo}_{\lambda}(\cx)$ denote the class of morphisms which are obtained as composites of $\lambda$-good $\lambda$-directed diagrams whose links 
are in $\mathrm{Po}(\cx)$. 

\begin{theorem}[Fat small object argument] \label{fsoa}
Let $\cm$ be a cocomplete category, $\cx$ a set of morphisms, and $\lambda$ a regular cardinal. Suppose that the domains of the morphisms in $\cx$ are strictly $\lambda$-small relative to 
$\cx$. Then every morphism $f$ in $\cm$ admits a factorization $f = p i$ into a morphism $i \in \mathrm{GdDirPo}_{\lambda}(\cx)$ followed by a morphism $p \in \cx^{\square}$.
\end{theorem}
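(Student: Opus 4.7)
The plan is to mimic Quillen's small object argument along a $\lambda$-good $\lambda$-directed poset $P$ instead of along a $\lambda$-chain, so that the resulting left factor belongs to $\mathrm{GdDirPo}_{\lambda}(\cx)$ on the nose.

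First, I would fix a $\lambda$-good $\lambda$-directed poset $P$ large enough to parametrize all lifting problems that will arise in the construction; the existence and combinatorial structure of such posets is the input from \cite{MRV}. I would then build a good diagram $D : P \to \cm$ together with a compatible cocone $D \Rightarrow B$ extending the given $f : A \to B$, by recursion on the well-founded order of $P$. At $\perp$, set $D(\perp) = A$ and use $f$ for the cocone. For each isolated element $x$ with predecessor $x^-$, attach a single bookkept lifting square
$$
\xymatrix@C=3pc{
X \ar[r]^-{u} \ar[d]_{g} & D(x^-) \ar[d] \\
Y \ar[r] & B
}
$$
with $g : X \to Y$ in $\cx$, and define $D(x^- \to x)$ to be the pushout of $g$ along $u$; the cocone extends to $D(x)$ by the pushout's universal property. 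At a limit element $x$, smoothness forces $D(x) = \colim_{y < x} D(y)$, and the cocone extends canonically.

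Second, I would arrange the bookkeeping so that every lifting square over $D(y) \to B$ with left leg in $\cx$ is resolved at some $z \geq y$. This is where $\lambda$-directedness is used: a family of fewer than $\lambda$ unresolved obligations at level $y$ can be dispatched above a common upper bound in $P$ joining $y$ with the elements indexing the resolving pushouts. Writing $p : \colim_P D \to B$ for the induced map and $i : A \to \colim_P D$ for the canonical one, I would then verify the two required properties. By construction $i$ lies in $\mathrm{GdDirPo}_{\lambda}(\cx)$, since $D$ is $\lambda$-good and $\lambda$-directed and every link is a pushout of a morphism in $\cx$. For $p \in \cx^{\square}$, given any square against $p$ with left leg $g : X \to Y$ in $\cx$, the hypothesis that $X$ is strictly $\lambda$-small relative to $\cx$, applied to $D$ whose links are in particular $\cx$-cellular, implies that the top map $X \to \colim_P D$ factors through some $D(y)$. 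The resulting square over $D(y) \to B$ is resolved at some $z \geq y$, producing the required lift $Y \to D(z) \to \colim_P D$.

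The main obstacle I expect is the combinatorial construction of $P$ and the accompanying bookkeeping: $P$ must be simultaneously $\lambda$-good, $\lambda$-directed, and admit enough room above each level to dispatch all lifting obligations arising below it, while remaining a set of bounded size so that the whole construction terminates. This is exactly the scaffold supplied by the fat small object argument of \cite{MRV}; once it is in place the remaining step is the application of strict $\lambda$-smallness to factor the left legs of lifting squares through intermediate stages $D(y)$, and a canonical, size-bounded choice of $P$ together with a canonical enumeration of lifting squares delivers the factorization. I do not expect any further subtlety beyond these combinatorial ones, as the categorical content of each isolated step is a single pushout and each limit step is forced by smoothness.
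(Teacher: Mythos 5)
Your proposal is correct and follows essentially the same route as the paper, which simply invokes the variation of the small object argument in \cite[Theorem A.1]{MRV}: both you and the paper defer the delicate combinatorics of building the $\lambda$-good $\lambda$-directed poset and its bookkeeping to \cite{MRV}, and you correctly isolate strict $\lambda$-smallness as the ingredient that lets the top legs of lifting squares factor through a stage $D(y)$. The paper additionally records a second route you do not mention: run the ordinary small object argument to get $f=pi$ with $i$ an $\cx$-cellular morphism and $p\in\cx^{\square}$, and then upgrade $i$ to a morphism in $\mathrm{GdDirPo}_{\lambda}(\cx)$ via the proof of \cite[Theorem 4.11]{MRV}.
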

\begin{proof}
This factorization can be obtained using a variation of the small object argument as in \cite[Theorem A.1]{MRV}. Alternatively, the small object argument yields a functorial factorization 
$f=pi$ where $i$ is a $\cx$-cellular morphism and $p \in \cx^{\square}$. Then the proof of \cite[Theorem 4.11]{MRV} applies to show that $i \in \mathrm{GdDirPo}_{\lambda}(\cx)$ 
as required (see also \cite[Remark A.2]{MRV}). 
\end{proof}

\begin{remark}\label{cofinal}
Any directed poset $P$ contains a cofinal good subposet $Q$ (see, e.g., \cite[2.7.2]{F}). If $P$ is $\lambda$-directed, then $Q$
is $\lambda$-directed too. But, given a $\lambda$-directed diagram $P\to\cm$, its restriction to $Q$ does not need to be smooth. 
So we do not know whether, for a strictly $\lambda$-small object $X$, the functor $\cm(X,-)$ preserves $\lambda$-directed colimits 
of $\cx$-cellular morphisms in general.
\end{remark}

\subsection{Weak factorization systems}

\medskip

We recall that a \textit{weak factorization system} $(\cl,\crr)$ in a category $\cm$ consists of two classes of morphisms $\cl$ and $\crr$  in $\cm$ such that
\begin{enumerate}
\item[(1)] $\crr = \cl^{\square}$, $\cl = {}^\square \crr$, 
\item[(2)] every morphism $h$ of $\cm$ has a factorization $h=gf$ with
$f\in \cl$ and $g\in \crr$.
\end{enumerate}
The small object argument provides examples of weak factorization systems in general cocomplete categories. One feature of these weak factorization systems is that they are 
determined by a set of morphisms. 

\begin{definition}
Let $(\cl, \crr)$ be a weak factorization system in $\cm$. A set of morphisms $\cx \subseteq \cl$ is called a \textit{generator for} $\crr$ if $\cx^\square=\crr$.
\end{definition} 

This property isolates one of the requirements for the weak factorization systems in the definition of a cofibrantly generated model category. A generator 
$\cx$ for $\crr$ obviously determines $\crr$, and therefore $\cl$, in terms of lifting properties. As the following remark shows, this notion generalizes 
the standard notion of a strong generator in a category. 

\begin{remark} \label{strong-generators}
Let $\cm$ be a category with finite coproducts and equalizers and consider the trivial (weak) factorization system 
$(\cm^{\to}, \Iso_{\cm})$ which is defined by the classes of all morphisms and isomorphisms in 
$\cm$, respectively. We claim that this weak factorization system has a generator if and only if $\cm$ has a strong generator. 

Suppose that $\ca$ is a strong generator of $\cm$. Under the assumption on equalizers, this means that a morphism 
$f: X \to Y$ is an isomorphism if $\cm(A, f): \cm(A, X) \to \cm(A, Y)$ is a bijection for all $A \in \ca$. 
Consider the set $\cx$ which consists of the morphisms 
$$0_A: 0\to A \ \ \text{and} \ \ \nabla_A: A \sqcup A\to A$$ 
for all $A\in\ca$ where $0$ denotes an initial object and $\nabla_A$ is the codiagonal. 
A morphism $f: X \to Y$ has the right lifting property with respect to $0_A$ (resp. $\nabla_A)$ if and only if 
the map 
$$\cm(A, f): \cm(A, X) \to \cm(A, Y)$$ 
is surjective (resp. injective). Hence $f \in \cx^{\square}$ if and only if $\cm(A, f)$ is bijective for all $A \in \ca$. 

For the converse, suppose that $\cx \subseteq \cm^{\to}$
is a generator for $\Iso_{\cm}$. Consider the set $\ca$ of domains and codomains of the morphisms in $\cx$. It is easy to see that a morphism $f: X \to Y$ 
such that $\cm(A, f)$ is bijective for all $A \in \ca$ also has the right lifting property with respect to $\cx$, and therefore it is an isomorphism. 
\end{remark}

Combined with the notions of smallness above, we have the following hierarchy of notions of cofibrant generation for weak factorization systems. 

\begin{definition} \label{many-def}
Let $(\cl,\crr)$ be a weak factorization system in a cocomplete category $\cm$ and $\lambda$ a regular cardinal. 
\begin{itemize} 
\item[(a)] $(\cl, \crr)$ is called \textit{setwise cofibrantly generated} if there is a generator $\cx \subseteq \cl$ 
for $\crr$.
\item[(b)] $(\cl, \crr)$ is called \textit{weakly cofibrantly generated} if there is a generator 
$\cx \subseteq \cl$ for $\crr$ such that $\cl = \mathrm{cof}(\cx)$. In this case, $\cx$ is called a \textit{generating 
set for} $(\cl, \crr)$. 
\item[(c)] $(\cl, \crr)$ is called ($\lambda$-)\textit{cofibrantly generated} if there is a generator 
$\cx \subseteq \cl$ for $\crr$ such that the domains of the morphisms in $\cx$ are ($\lambda$-)small relative to $\cx$. 
\item[(d)] $(\cl,\crr)$ is called \textit{strictly} ($\lambda$-)\textit{cofibrantly generated} if there is a 
generator $\cx \subseteq \cl$ for $\crr$ such that the domains of the morphisms in $\cx$ are strictly ($\lambda$-)small relative to $\cx$.
\item[(e)] $(\cl, \crr)$ is called \textit{semi-perfectly} ($\lambda$-)\textit{cofibrantly generated} if there is a generator 
$\cx \subseteq \cl$ for $\crr$ such that the domains of the morphisms in $\cx$ are ($\lambda$-)presentable objects in $\cm$.
\item[(f)] $(\cl, \crr)$ is called \textit{perfectly} ($\lambda$-)\textit{cofibrantly generated} if there is a generator 
$\cx \subseteq \cl$ for $\crr$ such that the domains and the codomains of the morphisms in $\cx$ are ($\lambda$-)presentable objects in $\cm$.
\end{itemize}
\end{definition}

The definition in (c) corresponds to the standard notion that appears in the definition of cofibrantly generated model categories. 
The weaker notion in (b) was considered in \cite{R2} together with the corresponding 
weaker notion of a cofibrantly generated model category. Concerning the definition in (a), note that a generator $\cx$ does 
not permit the small object argument in general, and therefore we cannot identify $\cl$ with $\mathrm{cof}(\cx)$. We are mainly 
interested in (d) which is motivated by the fat small object argument as previously explained. We have the following obvious 
implications:
$$\xymatrix{
\text{perfectly cof. generated} \ar@{=>}[r] & \text{strictly cof. generated} \ar@{=>}[r] & \text{cofibrantly generated} \ar@{=>}[d] \\
&& \text{weakly cof. generated} \ar@{=>}[d] \\
&& \text{setwise cof. generated}
}
$$

\begin{example} \label{compact-hausdorff}
Let $\ck$ be the category of compact Hausdorff spaces with the trivial weak factorization system $(\ck^{\to}, \Iso_{\ck})$. Since the one-point space $\ast$ is a strong generator in $\ck$, 
this weak factorization system has a generator $\cx$ which consists of the maps (see Remark \ref{strong-generators})
$$0: \varnothing \to \{*\} \ \ \text{and} \ \ \nabla: \{*\} \sqcup \{*\} \to \{*\}.$$
Moreover, it is weakly cofibrantly generated because every morphism in $\ck$ is in $\mathrm{cof}(\cx)$. To see this, note first that every surjective map can 
be obtained as a pushout along a coproduct of copies of the map $\nabla$. For an injective map $f: X \to Y$, consider the factorization  
$$X \stackrel{f}{\cong} f(X) \stackrel{j}{\longrightarrow} f(X) \sqcup \mathbb{F}(Y - f(X)^{\delta}) \stackrel{p}{\longrightarrow} Y$$
where $(Y-f(X))^{\delta}$ denotes the discrete topological space with underlying set $Y - f(X)$ and $\mathbb{F}$ is the associated free compact Hausdorff space. Recall that for a set $Z$,  $\mathbb{F}(Z)$ is the compact Hausdorff space 
of all ultrafilters on $Z$.  The map $j$ is clearly in $\mathrm{cell}(\cx)$, by definition. The map $p$ is surjective, so it is in $\mathrm{cof}(\cx)$, too. Note that in this case we have 
$\mathrm{cell}(\cx) = \mathrm{cof}(\cx)$.  

However, this weak factorization system is not cofibrantly generated because $\varnothing$ is the only small object in $\ck$. To see this, it suffices to show that $*$ is not small in $\ck$. A regular cardinal $\lambda$ can be written as the colimit of the 
$\lambda$-directed chain of all ordinals with cardinality less than $\lambda$. Then an ultrafilter on $\lambda$ which contains the filter of all subsets whose complement has cardinality less than $\lambda$ defines a point in $\mathbb{F}(\lambda)$ which does not factor through any stage of the chain. Hence, $*$ is not $\lambda$-small in $\ck$. 
\end{example}

We do not know an example of a cofibrantly generated weak factorization system which is not strictly cofibrantly generated. Such an example does not exist in the case of $\aleph_0$ 
by the following result. 

\begin{proposition}\label{finitely small case}
Let $(\cl,\crr)$ be a cofibrantly generated weak factorization system in $\cm$ generated by a set of morphisms $\cx$ whose domains are $\aleph_0$-small relative to 
$\cx$. Then $(\cl,\crr)$ is strictly $\aleph_0$-cofibrantly generated. 
\end{proposition}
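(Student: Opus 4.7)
The plan is to verify that each domain of $\cx$ is actually strictly $\aleph_0$-small relative to $\cx$; the same set $\cx$ will then witness strict $\aleph_0$-cofibrant generation of $(\cl,\crr)$. Fix a domain $X$ of a morphism in $\cx$ and a good $\aleph_0$-directed diagram $D\colon P\to\cm$ with $\cx$-cellular links and colimit $Y=\colim_P D$. The aim is to establish $\cm(X,Y)=\colim_P \cm(X,D(-))$, and I would proceed by transfinite induction on the cardinality $|P|$.

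For the base case $|P|\leq\aleph_0$, the directedness of $P$ yields a cofinal $\omega$-chain $p_0\leq p_1\leq\cdots$ (the finite case being trivial). Each transition $D(p_n)\to D(p_{n+1})$ is $\cx$-cellular: working inside the finite good subdiagram $D|_{\downarrow p_{n+1}}$ and enumerating the elements of $\downarrow p_{n+1}\setminus\downarrow p_n$ compatibly with the partial order, one builds this map as a finite iterated sequence of pushouts of links and isomorphisms. The $\aleph_0$-smallness of $X$ applied to the resulting smooth $\omega$-chain then yields the required factorizations.

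For the inductive step, $|P|=\kappa>\aleph_0$. I would fix a linear extension of the partial order on $P$ to obtain a well-ordering $P=\{p_\alpha\mid\alpha<\mu\}$ with $p_0=\bot$ and $\mu$ the initial ordinal of $\kappa$, so that $|Q_\alpha|<\kappa$ for the initial down-set $Q_\alpha=\{p_\beta\mid\beta\leq\alpha\}$. Setting $Z_\alpha=\colim_{Q_\alpha} D|_{Q_\alpha}$, the chain $\{Z_\alpha\}_{\alpha<\mu}$ is smooth and has $\cx$-cellular transitions: at a successor $\alpha+1$ the transition is either a pushout of the link $D(p_{\alpha+1}^-)\to D(p_{\alpha+1})$ (when $p_{\alpha+1}$ is isolated in $P$, noting that $p_{\alpha+1}^-\in Q_\alpha$ by the linear extension property) or an isomorphism (when $p_{\alpha+1}$ is a limit in $P$, since smoothness of $D$ makes $D(p_{\alpha+1})=\colim_{q<p_{\alpha+1}} D(q)$ already present in $Z_\alpha$); smoothness at limit ordinals of the chain follows from $Q_\alpha=\bigcup_{\beta<\alpha}Q_\beta$. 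By the $\aleph_0$-smallness of $X$ applied to this chain, a given $f\colon X\to Y$ factors through some $Z_{\beta_*}$, and a minimal such $\beta_*$ cannot be a proper limit ordinal (by the preservation property one could descend strictly); so $\beta_*=0$ or $\beta_*=\gamma+1$ with a genuine pushout transition.

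The crucial reduction in the successor case relies on rewriting $Z_{\beta_*}$ as the colimit of the good $\aleph_0$-directed diagram $Q'\mapsto\colim_{Q'} D|_{Q'}$ on the poset $Q_{\beta_*}^{\mathrm{fin}}$ of finite sub-down-sets of $Q_{\beta_*}$ (which is $\aleph_0$-good and directed, with $\cx$-cellular links by the finite-down-set analysis). Since $|Q_{\beta_*}^{\mathrm{fin}}|\leq|Q_{\beta_*}|<\kappa$, the inductive hypothesis applies and yields a factorization of $f\colon X\to Z_{\beta_*}$ through $\colim_{Q'}D|_{Q'}$ for some finite $Q'\in Q_{\beta_*}^{\mathrm{fin}}$. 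The finite subset $Q'\subseteq P$ then admits an upper bound $p\in P$ by directedness, and the induced cocone $D|_{Q'}\to D(p)$ completes the factorization $X\to\colim_{Q'}D|_{Q'}\to D(p)\to Y$. The injectivity part of the preservation statement is handled similarly, matching two factorizations that agree in $Y$ through a common stage and then a common upper bound in $P$. The main obstacle is ensuring that the successor case reduces to a strictly smaller cardinality, which relies on the careful choice of linear extension indexed by the initial ordinal $\mu$ and on the finite-initial-segment property of the $\aleph_0$-good poset $P$.
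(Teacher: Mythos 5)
Your reduction of the colimit over $P$ to an ordinal-indexed smooth chain is sound: the down-sets $Q_\alpha$ obtained from a linear extension indexed by the initial ordinal $\mu$ of $\kappa$ do yield a smooth chain $\{Z_\alpha\}$ whose successor transitions are pushouts of links or isomorphisms (since $\downdownarrows p_{\alpha+1}\subseteq Q_\alpha$), and $\aleph_0$-smallness of $X$ then factors a given $f$ through some $Z_{\beta_*}$. The genuine gap is in what comes next, and it stems from a misreading of Definition \ref{strictly-small-def}: strict $\aleph_0$-smallness requires preservation of colimits over $\aleph_0$-directed \emph{good} posets, not over $\aleph_0$-\emph{good} ones, so the initial segments $\downarrow x$ may be infinite. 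You invoke finiteness of initial segments at two essential points. In the base case, $\downarrow p_{n+1}$ need not be finite even when $|P|\leq\aleph_0$ (e.g.\ $P=\omega+1$), so the transition $D(p_n)\to D(p_{n+1})$ is not a \emph{finite} composite of pushouts; one needs the general fact that in a good diagram with $\cx$-cellular links every $D(p)\to D(q)$ is $\cx$-cellular, which requires a transfinite enumeration argument (this is essentially \cite[Remark 4.8]{MRV}). More seriously, in the inductive step the poset $Q_{\beta_*}^{\mathrm{fin}}$ of finite sub-down-sets only covers $\{p\in Q_{\beta_*}\mid \downarrow p \text{ finite}\}$, since an element $c$ with $\downarrow c$ infinite lies in no finite down-set; hence $\colim_{Q'\in Q_{\beta_*}^{\mathrm{fin}}}\colim_{Q'}D$ need not be $Z_{\beta_*}$, and the identification of the links of this diagram with links of $D$ (or isomorphisms) also breaks down, because a finite down-set with a unique maximal element $c$ equals $\downarrow c$ only when $\downarrow c$ is finite. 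You acknowledge this reliance yourself in your closing sentence, but it is exactly the hypothesis you are not entitled to.

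The underlying difficulty your finite-down-set device is trying to solve is that $Q_{\beta_*}$, though a down-set of cardinality $<\kappa$, need not be directed, so the induction hypothesis cannot be applied to it directly. The paper sidesteps this entirely: following \cite[1.6]{AR} it writes $P$ as a union of a smooth chain of \emph{directed} subposets $P_k$ of smaller cardinality, restricts $D$ to the (still directed) initial segments $\downarrow P_k$, uses \cite[Remark 4.8]{MRV} to see that the transition maps $\colim D_k\to\colim D_{k'}$ are $\cx$-cellular, and inducts on the cofinality of $P$; no ``directification'' of a non-directed down-set is ever needed. To repair your argument along your own lines you would have to replace finite down-sets by finitely generated down-sets $\downarrow F$ (which do cover $Q_{\beta_*}$ and form a directed poset), and then verify well-foundedness, smoothness and cellularity of links for that diagram --- none of which is addressed in your sketch.
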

\begin{proof}
It suffices to show that any domain $X$ of a morphism from $\cx$ is strictly $\aleph_0$-small. Let $P$ be a directed 
good poset and $D:P \to \cm$ a good diagram whose links are $\cx$-cellular morphisms. Following \cite[1.6]{AR}, we can 
express $P$ as a union of a smooth chain of directed subposets $P_k$ each of which has cardinality less than that of $P$. 
Let $D_k$ denote the restriction of $D$ to $\downarrow P_k$.  Here $\downarrow P_k$ denotes the initial segment generated by $P_k$. Note that $P_k$ is cofinal in $\downarrow P_k$. 
Moreover, $D_k$ is again a good diagram whose links are in $\mathrm{cell}(\cx)$. 
Following \cite[Remark 4.8]{MRV}, the induced morphisms 
for $k < k'$,
$$\colim D_{k} \to \colim D_{k'}$$ 
are $\cx$-cellular. 
Clearly, we have $\colim_k \colim_{\downarrow P_k} D_k \cong \colim_P D $. Then we can proceed by transfinite induction on the 
cofinality of $P$, that is, the smallest cardinality for which there is a cofinal subposet (cf. \cite[Corollary 1.7]{AR}).
\end{proof}

\section{Cofibrant Objects and Weak Equivalences}

Let $\cm$ be a cocomplete category and $(\cl, \crr)$ a weak factorization system in $\cm$. An object $X$ in $\cm$ is called $\cl$-\textit{cofibrant}, or simply \textit{cofibrant}, 
if the morphism $0 \to X$ is in $\cl$, where $0$ denotes an initial object. Let $\cm_c  \subseteq \cm$ denote the full subcategory which is spanned by the cofibrant objects.

In this section, we discuss some useful properties of the full subcategory $\cm_c  \subseteq \cm$ in the case where $(\cl, \crr)$ is strictly cofibrantly 
generated. We will also use these properties to identify general conditions on a model category so that the class of weak equivalences (between cofibrant objects) is closed 
under large enough filtered colimits. The results in this section form the technical backbone of the proof of our main theorem in the next section (Theorem \ref{main}). 

\subsection{Small dense subcategories of cofibrant objects} 

\medskip

Let $\cm$ be a cocomplete category and $(\cl, \crr)$ a strictly cofibrantly generated weak factorization system in $\cm$ with a generating set $\cx$. 
For any cardinal $\lambda$, let $\widetilde{\ca}(\cx, \lambda)$ be the smallest full subcategory of $\cm$ which contains the domains and codomains of the morphisms in 
$\cx$ and is closed under $\lambda$-small colimits in $\cm$. Here $\lambda$-small refers to colimits of diagrams indexed by categories whose sets of morphisms have cardinality smaller than $\lambda$. We consider the small full 
subcategory $\ca(\cx, \lambda)$ of $\cm_c$ given by 
$$\ca(\cx, \lambda) = \widetilde{\ca}(\cx, \lambda) \cap \cm_c.$$
Note that the construction of $\ca(\cx, \lambda)$ has the following properties:
\smallskip
\begin{itemize}
\item[(a)] if $\lambda \leq \mu$, then $\ca(\cx, \lambda) \subseteq \ca(\cx, \mu)$.
\item[(b)] if $\lambda = \bigvee_{i < \alpha} \lambda_i$, then $\ca(\cx, \lambda) = \bigcup_{i < \alpha} \ca(\cx, \lambda_i)$.
\end{itemize}

\smallskip

In the following statements, we will make use of the definition and the properties of preaccessible categories (see \cite{AR1}). For a regular cardinal $\lambda$, a category $\cc$ is called $\lambda$-\textit{preaccessible} 
if it has a set of $\lambda$-presentable objects $\ca$ such that each object in $\cc$ is a $\lambda$-filtered colimit of objects in $\ca$. Note that we do not assume that 
$\cc$ has all $\lambda$-filtered colimits. We say that $\cc$ is \textit{preaccessible} if it is $\lambda$-preaccessible for some regular cardinal $\lambda$. A preaccessible category has a small dense subcategory \cite{AR1}. 
Conversely, assuming Vop\v{e}nka's principle, a category which has a small dense subcategory is preaccessible  \cite[Theorem 1]{AR1}. 

Recall also that a full subcategory $\ca$ of a category $\cc$ is \textit{colimit-dense} provided that every object of $\cc$ is a colimit of a diagram in $\ca$. 
Assuming Vop\v{e}nka's principle, a category which has a small colimit-dense subcategory has also a small dense subcategory \cite[Theorem 6.35]{AR}.

\begin{theorem}\label{swfs-general case} 
Let $\cm$ be a cocomplete category and $(\cl, \crr)$ a strictly $\kappa$-cofibrantly generated weak factorization system in $\cm$ with generating set $\cx$. 
\begin{itemize}
\item[(1)] For every regular cardinal $\lambda \geq \kappa$,  every object of $\cm_c$ is a $\lambda$-filtered colimit in $\cm$ of objects in $\ca(\cx, \lambda)$.
\item[(2)]  Suppose that $(\cl, \crr)$ is perfectly $\kappa$-cofibrantly generated with generating set $\cx$ and let $\lambda \geq \kappa$ be a regular cardinal. Then the small colimit-dense subcategory $\ca(\cx, \lambda) \subseteq \cm_c$ 
consists of $\lambda$-presentable objects in $\cm$. In particular, every cofibrant object is presentable in $\cm$. 
\item[(3)] Assuming Vop\v enka's principle, $\cm_c$ is preaccessible and therefore every cofibrant object is presentable in $\cm_c$. Moreover, there are arbitrarily large regular cardinals $\lambda$ such that every object of $\ca(\cx, \lambda)$ is $\lambda$-presentable in $\cm_c$. 
\end{itemize}
\end{theorem}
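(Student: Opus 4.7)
For part~(1), let $X \in \cm_c$ and $\lambda \geq \kappa$ be a regular cardinal. Since strict $\kappa$-smallness implies strict $\lambda$-smallness (the $\mu$-directed colimits for $\mu \geq \lambda$ form a subclass of the $\lambda$-directed colimits), Theorem~\ref{fsoa} applies at rank $\lambda$ to produce a factorization $0 \xrightarrow{i} Y \xrightarrow{p} X$ with $p \in \crr$ and $i$ the composite of a $\lambda$-good $\lambda$-directed diagram $D \colon P \to \cm$, with $D(\perp) = 0$, whose links lie in $\mathrm{Po}(\cx)$. Because $0 \to X \in \cl$, the retract argument exhibits $X$ as a retract of $Y$. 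For each $q \in P$, the partial colimit $Y_q := \colim_{q' \in \downarrow q} D(q')$ is a $\lambda$-small colimit (as $|\downarrow q| < \lambda$) built from domains and codomains of $\cx$, so $Y_q \in \widetilde{\ca}(\cx, \lambda)$; moreover $0 \to Y_q$ is $\cx$-cellular by construction, so $Y_q$ is cofibrant, whence $Y_q \in \ca(\cx, \lambda)$. The assignment $q \mapsto Y_q$, indexed by the $\lambda$-directed poset $P$, then presents $Y$ as a $\lambda$-filtered colimit of objects of $\ca(\cx, \lambda)$. To transfer this presentation to $X$ itself, I would work with the comma category $J := \ca(\cx, \lambda)/X$ and aim to show that it is $\lambda$-filtered (using that $\lambda$-small coproducts of cofibrant objects are cofibrant and remain in $\widetilde{\ca}(\cx, \lambda)$) and has $X$ as the colimit of its tautological diagram; the FSOA supplies a cofinal subfamily of $J$ whose colimit in $\cm$ is $Y$, and the retraction $Y \to X$ then identifies $X$ with the colimit.

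For part~(2), the key observation is that $\lambda$-small colimits of $\lambda$-presentable objects in a cocomplete category are $\lambda$-presentable, since $\lambda$-small limits commute with $\lambda$-filtered colimits in $\Set$. Hence the hypothesis that the domains and codomains of $\cx$ are $\lambda$-presentable propagates through $\widetilde{\ca}(\cx, \lambda)$ and so also to $\ca(\cx, \lambda)$, proving the first assertion. The conclusion that every cofibrant object is presentable in $\cm$ then follows by combining part~(1) with the elementary fact that a $\lambda$-filtered colimit of $\lambda$-presentable objects indexed by a set of cardinality $\nu$ is $\nu^+$-presentable.

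For part~(3), part~(1) applied with $\lambda = \kappa$ yields a small colimit-dense subcategory $\ca(\cx, \kappa) \subseteq \cm_c$. Assuming Vop\v{e}nka's principle, Theorem~6.35 of \cite{AR} upgrades this to a small dense subcategory of $\cm_c$, and then Theorem~1 of \cite{AR1} implies that $\cm_c$ is preaccessible; in particular, every object of $\cm_c$ is presentable in $\cm_c$. For the refined claim that arbitrarily large regular $\lambda$ satisfy $\ca(\cx, \lambda) \subseteq \lambda$-presentables of $\cm_c$, I would align the parameter $\lambda$ appearing in the construction of $\ca(\cx, \lambda)$ with the $\lambda$-presentability structure of $\cm_c$ arising from preaccessibility; standard cofinality results on preaccessible categories guarantee that such matching $\lambda$ exist cofinally, and at those cardinals one combines the presentability bound from part~(2)-style reasoning carried out inside $\cm_c$ with the closure of $\ca(\cx, \lambda)$ under $\lambda$-small colimits.

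The main obstacle is the retract step in part~(1): transferring the $\lambda$-filtered presentation of the FSOA output $Y$ to the retract $X$. Verifying that $J = \ca(\cx, \lambda)/X$ is $\lambda$-filtered is delicate because $\ca(\cx, \lambda)$ is not a priori closed under all $\lambda$-small colimits within $\cm_c$ (arbitrary coequalizers need not preserve cofibrancy); one must exploit the cellular structure of the objects produced by the FSOA together with the closure of cofibrations under coproducts and pushouts along $\cx$. In part~(3), the coherent alignment of cardinals across the construction and the preaccessibility data, and the repeated appeals to Vop\v{e}nka's principle, are the additional technical points.
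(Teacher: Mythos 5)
Your treatment of (1) and (2) follows essentially the paper's route: the fat small object argument presents each $\cx$-cellular object as a $\lambda$-good $\lambda$-directed colimit whose partial colimits lie in $\ca(\cx,\lambda)$, and (2) is immediate from the closure of the class of $\lambda$-presentable objects under $\lambda$-small colimits. The retract step you flag in (1) is a real issue, but it is exactly what the paper disposes of by citing \cite[Corollary 5.1]{MRV} and \cite[Proposition 2.3.11]{MP}; your comma-category sketch is a plausible substitute but is left uncarried-out, so as written this step is incomplete rather than wrong.

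The genuine gap is in the second half of (3). The claim that there are arbitrarily large regular $\lambda$ such that $\ca(\cx,\lambda)$ consists of $\lambda$-presentable objects \emph{of $\cm_c$} cannot be obtained by ``part (2)-style reasoning carried out inside $\cm_c$'': in (3) the domains and codomains of $\cx$ are not assumed presentable, and even under the hypotheses of (2) presentability in $\cm$ does not transfer to $\cm_c$, because the inclusion $\cm_c \to \cm$ need not preserve $\lambda$-directed colimits (this is precisely the point of Remark \ref{weakly}). The paper's actual mechanism is set-theoretic: embed $\cm_c$ fully into $\Gra$ using its small dense subcategory, invoke Vop\v{e}nka's principle via \cite[Theorem 6.9]{AR} so that the inclusion $\cm_c\to\Gra$ preserves $\lambda_0$-directed colimits for some $\lambda_0$ (whence $|X|<\alpha$ forces $\alpha$-presentability in $\cm_c$ for regular $\alpha\geq\lambda_0$), define $\alpha^*$ as the least bound on the cardinalities of the graphs underlying the objects of $\ca(\cx,\alpha)$, check that the class $\{\alpha : \alpha=\alpha^*\}$ is closed and unbounded, and finally use that the class of all regular cardinals is stationary under Vop\v{e}nka's principle (\cite[Theorem 3]{Ma}) to find a regular $\lambda$ in this club. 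Your appeal to ``standard cofinality results on preaccessible categories'' does not identify any of this machinery, and in particular misses the second, independent use of Vop\v{e}nka's principle (stationarity of the regular cardinals), without which the argument does not close.
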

\begin{proof}
First note that the domains of the morphisms in $\cx$ are strictly $\lambda$-small for each $\lambda \geq \kappa$. Following the proof of \cite[Theorem 4.11]{MRV} (cf. Theorem \ref{fsoa}), every $\cx$-cellular object 
is a $\lambda$-good $\lambda$-directed colimit of a diagram in $\cm$ consisting of objects in $\ca(\cx, \lambda)$ and 
$\cx$-cellular morphisms. Then every cofibrant object is a $\lambda$-directed colimit in $\cm$ of objects from $\ca(\cx, \lambda)$ (see the proof of \cite[Corollary 5.1]{MRV} and 
\cite[Proposition 2.3.11]{MP}). This completes the proof of (1). (2) follows from the construction of the small colimit-dense subcategory $\ca(\cx, \lambda)$. The presentability of the 
cofibrant objects 
can also be seen directly from the small object argument. 

The first part of (3) follows from (1) and \cite[Theorem 6.35]{AR} combined with \cite[Theorem 1]{AR1} or \cite[Corollary 6.11]{AR}. For the second part, we may assume that 
$\cm_c$ is a full subcategory of the category of graphs $\Gra$ because $\cm_c$ has a small dense subcategory (see \cite[Proposition 1.26, Theorem 2.65]{AR}). Assuming Vop\v{e}nka's principle, 
the inclusion $\cm_c \to \Gra$ preserves $\lambda_0$-directed colimits for some $\lambda_0$ by \cite[Theorem 6.9]{AR}.  For $X\in\cm_c$, $|X|$ denotes the cardinality of the underlying set of the graph $X$. Then, given a regular cardinal $\alpha \geq \lambda_0$,  $|X|<\alpha$ implies that $X$ is 
$\alpha$-presentable in $\cm_c$ because it is so in $\Gra$. For \emph{any} cardinal $\alpha$, let $\alpha^* \geq \alpha$ be the smallest cardinal  such that for each $X \in \ca(\cx, \alpha)$, we have that $|X| < \alpha^*$. Let $\C$ be the class of all cardinals $\alpha \geq \lambda_0$ such that $\alpha = \alpha^*$. As in the proof of \cite[Theorem 1]{AR1}, we easily check that the class $\C$ is closed and unbounded. Since the class of all regular cardinals is stationary under Vop\v enka's principle, it follows that $\C$ contains a regular cardinal $\lambda$ (see \cite[Theorem 3]{Ma}). This regular cardinal $\lambda$ satisfies (3). Since $\lambda_0$ can be chosen to be arbitrarily large, the same holds for $\lambda$, too. 
\end{proof} 
 
\begin{remark}\label{weakly}
In Theorem \ref{swfs-general case}(2), the objects of $\ca(\cx, \lambda)$ are $\lambda$-presentable in $\cm$ by construction. 
We cannot claim that they are $\lambda$-presentable in $\cm_c$ because the inclusion $\cm_c\to\cm$ need not 
preserve $\lambda$-directed colimits. 
\end{remark}

The following statement is well known in the context of combinatorial model categories (cf. \cite[Proposition 7.2]{D1}) and 
will be useful in the proofs of our main results. We state it here more generally for weak factorization systems.  

\begin{corollary}\label{swfs-factorizations}
Let $\cm$ be a cocomplete category and $(\cl, \crr)$ a strictly cofibrantly generated weak factorization system. 
\begin{itemize}
\item[(1)] Assuming Vop\v{e}nka's principle, for arbitrarily large regular cardinals $\lambda$, the following holds: every morphism $f: X \to Y$ between 
$\lambda$-presentable objects in $\cm_c$ admits a functorial factorization $X \stackrel{i}{\to} Z \stackrel{p}{\to}
Y$ with $i \in \cl$, $p \in \crr$ and $Z$ is again $\lambda$-presentable in $\cm_c$. 
\item[(2)] Suppose that $(\cl, \crr)$ is perfectly cofibrantly generated. Then for arbitrarily large regular cardinals $\lambda$, the following holds:
every morphism $f: X \to Y$ between cofibrant objects which are $\lambda$-presentable in $\cm$ admits a functorial factorization $X \stackrel{i}{\to} Z \stackrel{p}{\to}
Y$ with $i \in \cl$, $p \in \crr$ and $Z \in \cm_c$ is again $\lambda$-presentable in $\cm$.
\end{itemize}
\end{corollary}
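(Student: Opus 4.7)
The plan is to apply the small object argument directly to $f \colon X \to Y$ and then to verify $\lambda$-presentability of the middle object $Z$ by transfinite induction. This essentially follows Dugger's strategy in \cite[Proposition 7.2]{D1}, adapted to the setting where $\cm$ need not be locally presentable.

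First I would choose $\lambda$ carefully so that a crucial cardinality bound holds: for every $\kappa$-presentable $A$ and every $\lambda$-presentable cofibrant $W$ (with $\kappa$ as in the hypotheses), one has $|\cm(A,W)| < \lambda$, or the analogous bound inside $\cm_c$ for (1). For (2), such $\lambda$ are unbounded because by Theorem \ref{swfs-general case}(1) every $\lambda$-presentable cofibrant is a retract of an object in $\ca(\cx,\lambda)$, itself built by $\lambda$-small colimits from $\kappa$-presentables; a closure-under-operations argument in the style of the end of the proof of Theorem \ref{swfs-general case}(3) produces a closed unbounded class of suitable $\lambda$. For (1), Theorem \ref{swfs-general case}(3) supplies arbitrarily large regular $\lambda$ for which every object of $\ca(\cx, \lambda)$ is $\lambda$-presentable in $\cm_c$, and a further closure step — transferring size estimates along the embedding $\cm_c \hookrightarrow \Gra$ afforded by preaccessibility of $\cm_c$ under Vop\v{e}nka's principle — singles out $\lambda$ satisfying the required hom-set bound inside $\cm_c$. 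I will also insist $\lambda \geq \kappa$ and $\lambda > |\cx|$.

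Next I would run the small object argument for $\cx$ on $f$: put $Z_0 = X$; at successor stages form $Z_{\alpha+1}$ as the pushout of $\coprod_s (A_s \to B_s)$ along $\coprod_s A_s \to Z_\alpha$ indexed by all lifting squares of morphisms in $\cx$ into $Z_\alpha \to Y$; at limits take the colimit; and set $Z := Z_\lambda$. The map $i \colon X \to Z$ is $\cx$-cellular, hence in $\cl$, so $Z$ is cofibrant, and $p \colon Z \to Y$ lies in $\cx^{\square} = \crr$; functoriality is automatic. That each $Z_\alpha$ is $\lambda$-presentable (in $\cm$ for (2), in $\cm_c$ for (1)) then follows by transfinite induction: at successors the hom-set bound forces the indexing set of lifting squares to have cardinality $< \lambda$, so $\coprod_s A_s$ and $\coprod_s B_s$ are $\lambda$-presentable and $Z_{\alpha+1}$ is a pushout of $\lambda$-presentables; at limits $Z_\beta$ is a $\lambda$-small colimit of $\lambda$-presentables.

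The main obstacle will be securing the hom-set cardinality bound — trivial in the locally presentable setting of \cite{D1} but here requiring the cellular description of cofibrants and closure arguments for (2), and relying crucially on Vop\v{e}nka's principle through the preaccessibility of $\cm_c$ afforded by Theorem \ref{swfs-general case}(3) for (1).
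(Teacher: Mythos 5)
Your construction of the factorization itself (the one-step functor indexed by all lifting squares, iterated transfinitely) is the same as the paper's, but the way you certify that $Z$ is $\lambda$-presentable diverges from the paper's argument and has genuine gaps. First, a concrete error: you iterate up to $\lambda$ and set $Z:=Z_\lambda$, but $Z_\lambda$ is the colimit of a chain of length $\lambda$, which is a $\lambda$-filtered colimit rather than a $\lambda$-small one, and a $\lambda$-filtered colimit of $\lambda$-presentables need not be $\lambda$-presentable (in $\Set$, a set of cardinality $\lambda$ is a $\lambda$-chain of smaller sets). Your induction therefore does not reach the final object; you must truncate at the smallness rank $\kappa$ of the domains of $\cx$, which already gives $p\in\cx^{\square}$ and keeps $Z$ a $\lambda$-small colimit --- this is exactly why the paper iterates only $\mu$ times to form $F_\mu$. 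Second, for part (1) your hom-set bound is not available: the domains and codomains of the morphisms in $\cx$ need not be cofibrant, and in the strictly cofibrantly generated case the codomains carry no smallness hypothesis at all, so $\lambda$-presentability of $W$ \emph{in $\cm_c$} gives no control over $\cm(A,W)$ or $\cm(B,Y)$; even after embedding into $\Gra$ one only gets a bound like $|\cm(A,W)|\le |W|^{|A|}$, which exceeds $\lambda$ unless $\lambda$ has strong closure properties. Relatedly, the successor-stage pushout is taken along $\coprod_s A_s\to\coprod_s B_s$ with $A_s,B_s$ possibly outside $\cm_c$, so ``a pushout of $\lambda$-presentables in $\cm_c$'' is not meaningful as stated. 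The paper's proof of (1) avoids counting lifting squares altogether: it only needs that the graph-cardinality $|F_\mu(f)|$ is bounded by \emph{some} function of $|X|$ and $|Y|$ (which is automatic), and then intersects the resulting closed unbounded class with the one from \cite{AR1} and invokes the stationarity of the regular cardinals under Vop\v{e}nka's principle to find a regular $\lambda$ for which ``$\lambda$-presentable in $\cm_c$'' coincides with ``graph cardinality $<\lambda$''.

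For part (2) the gap is set-theoretic. Your ``closure-under-operations argument in the style of the end of the proof of Theorem \ref{swfs-general case}(3)'' produces a closed unbounded class of cardinals, but the statement requires \emph{regular} $\lambda$, and ZFC does not prove that a closed unbounded class of cardinals contains a regular cardinal --- that is precisely the stationarity of the regulars, which the paper only has under Vop\v{e}nka's principle. Since (2) is meant to hold without Vop\v{e}nka's principle, this route fails. The paper's ZFC argument instead observes that the truncated functor $F_\mu\colon\cm^{\to}\to\cm$ preserves $\mu$-filtered colimits and sends morphisms between cofibrant objects to presentable objects, and then applies the relative Uniformization Theorem (Proposition \ref{uniformization2}, via Theorem \ref{swfs-general case}(2) for the pointwise weak factorization system on $\cm^{\to}$): for every regular $\lambda$ sharply greater than a suitable $\kappa_1$ --- and such $\lambda$ are unbounded in ZFC, e.g.\ cardinal successors --- a $\lambda$-presentable object is a retract of a $\lambda$-small $\mu$-filtered colimit of objects from the small dense subcategory, and $F_\mu$ preserves this presentation. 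If you wish to keep your counting argument for (2), the hom-set bound must likewise be extracted from the sharply-greater relation rather than from a club.
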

\begin{proof}
We first prove (2) whose proof is based on the small object argument and is similar to the combinatorial case. Let $\cx \subseteq \cl$ be a generator for $\crr$ such that 
the domains and codomains of $\cx$ are $\mu$-presentable. For every morphism $f:X\to Y$ in $\cm$ form the colimit $F_0f$ 
of the diagram
$$
\xymatrix{
X \\
&&\\
A\ar[uu]^u  \ar@{.}[ur] \ar [rr]_h && B
}
$$
consisting of all spans $(u,h)$ with $h:A\to B$ in $\cx$, one for each commutative square 
$$
\xymatrix{
A \ar[d]_h \ar[r]^u & X \ar[d]^f \\
B \ar[r]^v & Y
}
$$
Let $\alpha_{0f}:X\to F_0f$ denote the component of the colimit cocone 
(the other components are morphisms $B\to F_0f$ and the squares
$$
\xymatrix@C=3pc@R=3pc{
X \ar[r]^{\alpha_{0f}} & F_0f \\
A \ar [u]^{u} \ar [r]_{h} &
B \ar[u]_{}
}
$$
are commutative). Let $\beta_{0f}:F_0f\to Y$ be the morphism induced by the morphisms $f$ and $v$'s. This way we get a functor $F_0:\cm^{\to}\to\cm$
which preserves $\mu$-filtered colimits. Let $F_if,\alpha_{if}$ and $\beta_{if}$, $i\leq\mu$, be given by the following transfinite induction:
$$F_{i+1}f =F_0\beta_{if}, \ \  \alpha_{i+1,f}=\alpha_{0, \beta_{if}}\alpha_{if}, \ \ \beta_{i+1,f}=\beta_{0,\beta_{if}}$$
and the limit step is given by taking colimits. Then the functor $F_\mu:\cm^{\to}\to\cm$ preserves $\mu$-filtered colimits.
For each $f: X \to Y$, the resulting functorial factorization $$X \stackrel{i}{\to} F_{\mu}(f) \stackrel{p}{\to} Y$$ has $i \in \cl$ and $p \in \crr$ by the small object argument. 
The existence of $\lambda$ can be argued similarly to \cite[Theorem 2.19]{AR} whose proof is also valid in this context (see Proposition \ref{uniformization2} below for a suitable 
formulation). Indeed, it suffices to check that \cite[Remark 2.15]{AR} can be used. In other words, it suffices to show that every morphism between cofibrant objects is a $\mu$-directed colimit of morphisms between cofibrant objects which are $\mu$-presentable in 
$\cm$. This is guaranteed by Theorem \ref{swfs-general case}(2) applied to the weak factorization system on $\cm^{\to}$ where the cofibrations are defined pointwise - this is again perfectly $\mu$-cofibrantly generated.

The proof of (1) starts similarly. Let $\cx \subseteq \cl$ be a generator for $\crr$ such that the domains  of $\cx$ are strictly
$\mu$-small relative to $\cx$. For every morphism $f:X\to Y$ in $\cm_c$, we can proceed as before and define a functor $F_\mu:\cm_c^{\to}\to\cm_c$. But we will need to use a different approach in order to obtain $\lambda$. 

By Theorem \ref{swfs-general case}(3), the category $\cm_c$ is preaccessible. As in the proof of Theorem \ref{swfs-general case}(3), 
we may assume that $\cm_c$ is a full subcategory of the category of graphs $\Gra$ such that the inclusion $\cm_c \to \Gra$ preserves $\lambda_0$-directed colimits for some $\lambda_0$. For $X\in\cm_c$, $|X|$ denotes the cardinality of the underlying set of the graph $X$. Given a regular cardinal $\alpha \geq \lambda_0$,  $|X|<\alpha$ implies that $X$ is $\alpha$-presentable in $\cm_c$. The proof of \cite[Theorem 1]{AR1} considered a closed and unbounded class of cardinals $\C_1$ with the property that for each $\alpha \in \C_1$, every subobject $Y \subseteq X$ in $\Gra$ with $X \in \cm_c$ and $|Y| < \alpha$ can be embedded into a strong subobject $Y^* \subseteq X$ in $\cm_c$ with $|Y^*| < \alpha$. Assuming Vop\v{e}nka's principle, it follows that there is a regular cardinal $\lambda$ in $\C_1$ (see the proof of \cite[Theorem 1]{AR1}). 
Given such a regular cardinal $\lambda$, each object $X \in \cm_c$ is a $\lambda$-directed colimit of objects in $\cm_c$ with cardinality (as graphs) $< \lambda$. Thus, if $X$ is $\lambda$-presentable object in $\cm_c$, then it is a split subobject of an object with cardinality $< \lambda$, and therefore also $|X| < \lambda$. 

For \emph{any} cardinal $\alpha$, let $\alpha^\ast \geq \alpha$ be the smallest cardinal such that $|X|,|Y|<\alpha$ implies that $|F_{\mu}(f)|<\alpha^\ast$ for every $f: X \to Y$, where $X\to F_{\mu}(f) \to Y$ is the factorization constructed above. 
Let $\C_2$ be the class of all cardinals $\alpha \geq \lambda_0$ such that $\alpha=\alpha^\ast$. As in the proof of \cite[Theorem 1]{AR1}, we easily check that the class $\C_2$ is closed and unbounded. The intersection $\C_3 = \C_1 \cap \C_2$ is again closed and unbounded. Since the class of all regular cardinals is stationary under Vop\v enka's principle (see \cite[Theorem 3]{Ma}), it follows that $\C_3$ contains a regular cardinal $\lambda$, and this has the required property. Since $\lambda_0$ can be chosen to be arbitrarily large, the same applies to $\lambda$, too. 

\end{proof}

\begin{corollary}\label{swfs-special case} 
Let $(\cm, \mathcal{C}of, \mathcal{W}, \mathcal{F}ib)$ be a model category. Suppose that the weak factorization system $(\mathcal{C}of, \cw \cap \mathcal{F}ib)$ 
is strictly $\kappa$-cofibrantly generated with generating set $\cx$. Then:
\begin{itemize}
 \item[(1)] For every regular cardinal $\lambda \geq \kappa$, every object of $\cm_c$ is a 
$\lambda$-filtered colimit of objects in $\ca(\cx, \lambda)$. 
 \item[(2)] Suppose that $(\mathcal{C}of, \cw \cap \mathcal{F}ib)$ is perfectly $\kappa$-cofibrantly generated with generating set $\cx$ and 
 let $\lambda \geq \kappa$ be a regular cardinal. Then the small colimit-dense subcategory $\ca(\cx, \lambda) \subseteq \cm_c$ consists of $\lambda$-presentable objects in $\cm$. 
 In particular, every cofibrant object is presentable in $\cm$. 
 \item[(3)] Assuming Vop\v enka's principle, $\cm_c$ is preaccessible and therefore every cofibrant object is presentable in $\cm_c$. Moreover, there are arbitrarily large 
regular cardinals $\lambda$ such that every object of $\ca(\cx, \lambda)$ is $\lambda$-presentable in $\cm_c$. 
\item[(4)] Suppose that every object is cofibrant and $(\mathcal{C}of \cap \cw, \mathcal{F}ib)$ is setwise cofibrantly generated. 
Then, assuming Vop\v enka's principle, $\cm$ is a combinatorial model category.
 \end{itemize}
\end{corollary}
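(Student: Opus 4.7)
My plan is as follows. Parts (1), (2), and (3) I would obtain directly by specializing Theorem \ref{swfs-general case} to the weak factorization system $(\mathcal{C}of, \cw \cap \mathcal{F}ib)$, since the cofibrant objects for this weak factorization system are precisely the objects of $\cm_c$ in the model-categorical sense. So these three parts require no additional argument beyond citing the theorem.

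For (4), the plan is first to derive that $\cm$ is locally presentable. Since every object is cofibrant, $\cm = \cm_c$. Then (3) yields, under Vop\v{e}nka's principle, a regular cardinal $\lambda$ and a set $\ca$ of $\lambda$-presentable objects in $\cm$ such that every object of $\cm$ is a $\lambda$-filtered colimit of objects in $\ca$. Because $\cm$ is cocomplete, it has all $\lambda$-filtered colimits, so the preaccessibility condition upgrades to genuine $\lambda$-accessibility. A cocomplete accessible category is locally presentable by a standard result (see \cite{AR}), so $\cm$ is locally presentable.

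Next I would verify that both weak factorization systems are cofibrantly generated in the sense of Definition \ref{many-def}(c). The first, $(\mathcal{C}of, \cw \cap \mathcal{F}ib)$, is strictly cofibrantly generated by hypothesis, and this immediately implies cofibrant generation. For the second, setwise cofibrant generation supplies a generator $\cj \subseteq \mathcal{C}of \cap \cw$ for $\mathcal{F}ib$. In the locally presentable category $\cm$ every object is presentable, so in particular the domains of the morphisms in $\cj$ are presentable, hence small relative to $\cj$. Therefore $(\mathcal{C}of \cap \cw, \mathcal{F}ib)$ is cofibrantly generated, and $(\cm, \mathcal{C}of, \cw, \mathcal{F}ib)$ is a combinatorial model category.

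The one substantive input is part (3), which is where Vop\v{e}nka's principle enters, via Theorem \ref{swfs-general case}(3). The remainder is a clean assembly of two standard facts: that a cocomplete preaccessible category is locally presentable, and that in a locally presentable category any generator for a weak factorization system automatically meets the smallness condition needed for the small object argument. I do not expect significant obstacles beyond correctly invoking (3).
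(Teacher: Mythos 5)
Your proposal is correct and follows essentially the same route as the paper: parts (1)--(3) are cited directly as special cases of Theorem \ref{swfs-general case}, and for (4) the paper likewise deduces from that theorem that $\cm=\cm_c$ is locally presentable with every object presentable, whence the generator for $\mathcal{F}ib$ automatically satisfies the smallness condition. Your write-up merely makes explicit the intermediate steps (cocomplete $+$ preaccessible $\Rightarrow$ locally presentable) that the paper leaves implicit.
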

\begin{proof}
(1)-(3) are special cases of Theorem \ref{swfs-general case}. For (4),  Theorem \ref{swfs-general case} yields that $\cm=\cm_c$ is 
locally presentable and every object is presentable. In particular, the weak 
factorization system $(\mathcal{C}of \cap \cw, \mathcal{F}ib)$ is cofibrantly generated.
\end{proof}

\begin{remark} \label{same-cardinal}

By combining the arguments in Corollaries \ref{swfs-factorizations}(1) and \ref{swfs-special case}(3) (or by 
applying Proposition \ref{uniformization} below), it follows similarly that there are arbitrarily large regular 
cardinals $\lambda$ for which both Corollary \ref{swfs-factorizations}(1) for $(\cl, \crr) = (\mathcal{C}of, \cw \cap \mathcal{F}ib)$ and Corollary \ref{swfs-special case}(3) are satisfied.
\end{remark}

\subsection{Functors between preaccessible categories} 

\medskip 

We digress slightly to add some comments on the arguments 
that were used in the  proofs of the previous subsection.  The comments concern certain variations of the Uniformization Theorem for accessible categories \cite[Theorem 2.19]{AR} and may be of independent interest too. 

First we introduce some terminology which is inspired from the context of Theorem \ref{swfs-general case}. Given a category $\cm$ and a full subcategory $\cb \subseteq \cm$, we denote by $\mathbf{Pres}^{\cm}_{\lambda}(\cb)$ the full subcategory of $\cb$ spanned by those objects which are $\lambda$-presentable in $\cm$. If $\cm$ admits $\kappa$-filtered colimits, we say that $\cb$ is $\kappa$-\emph{preaccessible in} $\cm$ if there is a small full subcategory $\ca \subseteq \cb$ which consists of $\kappa$-presentable objects in $\cm$ such that every object in $\cb$ is a $\kappa$-filtered colimit in $\cm$ of objects that lie in $\ca$. 

The following result can be regarded as a kind of  ``relative" Uniformization Theorem.

\begin{proposition} \label{uniformization2}
Let $\cm$ and $\cn$ be categories which admit $\kappa$-filtered colimits for some regular cardinal $\kappa$, and $$F_i \colon \cm \to \cn, \ \ i \in I,$$ a small collection of functors which preserve $\kappa$-filtered colimits. Let $\cb \subseteq \cm$ be a full subcategory which is $\kappa$-preaccessibe in $\cm$ and suppose that for every $X \in \cb$, $F_i(X) \in \cn$
is presentable in $\cn$ for every $i \in I$. Then there are arbitrarily large regular cardinals $\lambda$ such that for every $X \in \mathbf{Pres}^{\cm}_{\lambda} (\cb)$, $F_i(X)$ is $\lambda$-presentable in $\cn$ 
for every $i \in I$.
\end{proposition}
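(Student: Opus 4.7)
The plan is to adapt the proof of the Uniformization Theorem \cite[Theorem 2.19]{AR} to this relative setting; the $\kappa$-preaccessibility of $\cb$ in $\cm$ supplies exactly the canonical $\kappa$-filtered presentation data that the classical argument requires, and this is enough even though $\cm$ itself is not assumed accessible.

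First I would set up the bounds. Fix a small witness $\ca\subseteq\cb$ for the $\kappa$-preaccessibility, so each object of $\ca$ is $\kappa$-presentable in $\cm$ and every object of $\cb$ is a $\kappa$-filtered colimit in $\cm$ of objects of $\ca$. Since $\ca$ and $I$ are small and every $F_i(A)$ with $A\in\ca$ is presentable in $\cn$ by hypothesis, there is a regular cardinal $\mu_0\geq\kappa$ such that $F_i(A)$ is $\mu_0$-presentable in $\cn$ for all $A\in\ca$ and $i\in I$. The core elementary computation is then the following: whenever $X\in\cb$ is exhibited as $X\cong\colim_J D$ in $\cm$ for a $\kappa$-filtered diagram $D\colon J\to\ca$ with $|J|<\lambda$ and $\lambda\geq\mu_0$ regular, preservation of $\kappa$-filtered colimits gives $F_i(X)\cong\colim_J F_i\circ D$, exhibiting $F_i(X)$ as a $\lambda$-small $\kappa$-filtered colimit in $\cn$ of $\mu_0$-presentable objects and hence as a $\lambda$-presentable object of $\cn$.

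The problem therefore reduces to producing arbitrarily large regular $\lambda$ for which every $X\in\mathbf{Pres}^{\cm}_{\lambda}(\cb)$ admits, at least up to retract, a $\lambda$-small $\kappa$-filtered presentation over $\ca$ in $\cm$. I would handle this with the closure-operator strategy already used in the proofs of Theorem \ref{swfs-general case} and Corollary \ref{swfs-factorizations}: define a map $\alpha\mapsto\alpha^{\ast}$ on cardinals $\alpha\geq\mu_0$ that bounds both the size of cofinal sub-presentations extractible from the preaccessibility data for $\alpha$-presentable objects of $\cb$ and any bookkeeping cardinal arithmetic (such as $\alpha^{<\kappa}$ and the sizes of $\ca$ and $I$). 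One then verifies that the class $\C$ of fixed points $\alpha=\alpha^{\ast}$ is closed and unbounded by the standard iteration-and-limit argument.

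I expect the genuine obstacle to be producing \emph{regular} cardinals in $\C$ within pure ZFC, in contrast with the Vop\v{e}nka-dependent arguments of Corollaries \ref{swfs-special case}(3) and \ref{swfs-factorizations}(1). Following \cite[Remark 2.15]{AR} and the argument in the proof of \cite[Theorem 2.19]{AR}, the $\ast$-operation can be arranged to be majorised by a sufficiently tame cardinal function, so that successors $\gamma^+$ with $\gamma^{<\kappa}=\gamma$ automatically lie in $\C$; standard cardinal arithmetic supplies such $\gamma$ cofinally in the cardinals, which yields the required arbitrarily large regular cardinals $\lambda$.
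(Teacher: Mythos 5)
Your first two paragraphs match the paper's proof exactly: the small witness $\ca$, the bound $\mu_0$ (the paper's $\kappa_1$), the observation that a $\lambda$-small $\kappa$-filtered colimit over $\ca$ has $\lambda$-presentable image under each $F_i$, and the reduction to showing that every $X \in \mathbf{Pres}^{\cm}_{\lambda}(\cb)$ is, up to retract, such a colimit. The gap is in how you propose to prove that reduction. The closure-operator/closed-unbounded-class strategy is the wrong tool here: in this paper it is deployed (in Theorem \ref{swfs-general case}(3), Corollary \ref{swfs-factorizations}(1) and Proposition \ref{uniformization}) only after embedding everything fully into $\Gra$ under Vop\v{e}nka's principle, so that each object has an underlying cardinality $|X|$ against which a cardinal function $\alpha \mapsto \alpha^{\ast}$ can be defined and its fixed points collected into a club. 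In the present ZFC setting $\cm$ is an abstract category with no underlying-set functor; the $\kappa$-filtered presentations supplied by preaccessibility have no uniform size bound as $X$ ranges over the $\alpha$-presentable objects of $\cb$, and $\mathbf{Pres}^{\cm}_{\alpha}(\cb)$ is not even known to be essentially small before the reduction is established. So your $\alpha^{\ast}$ is not well defined (it would be a supremum over a possibly proper class of unbounded quantities), and the subsequent club argument has nothing to operate on.

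The actual argument is uniform in $X$ and needs no club. Choose a regular $\kappa_1 \triangleright \kappa$ with every $F_i(A)$, $A \in \ca$, $\kappa_1$-presentable in $\cn$, and let $\lambda \triangleright \kappa_1$ be any regular cardinal; arbitrarily large such $\lambda$ exist in ZFC by the standard properties of the relation $\triangleleft$ (see \cite[2.11--2.13]{AR}, e.g. $\kappa_1 \triangleleft \gamma^{+}$ whenever $\gamma^{<\kappa_1}=\gamma$ --- this is exactly the cardinal-arithmetic fact your last paragraph is circling, but it enters here, not in a club argument). Given $X \in \mathbf{Pres}^{\cm}_{\lambda}(\cb)$ with a $\kappa$-directed presentation $D \colon P \to \ca$, form the poset $\hat{P}$ of $\kappa$-directed $\lambda$-small subsets of $P$; the condition $\kappa \triangleleft \lambda$ is precisely what makes $\hat{P}$ $\lambda$-directed, the induced diagram $\hat{D}$ (sending $Q$ to $\colim_{Q} D$) still has colimit $X$, and $\lambda$-presentability of $X$ in $\cm$ then splits $\mathrm{id}_X$ through some $\hat{D}(Q)$. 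This single poset manipulation is the missing step; without it, or something equivalent, your proof does not go through.
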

\begin{proof}
The proof follows \cite[Theorem 2.19, Remark 2.15]{AR}. Let $\ca \subseteq \cb$ be a small full subcategory of $\kappa$-presentable objects in $\cm$ such that every object in $\cb$ is a $\kappa$-directed colimit of objects from $\ca$. Let $\kappa_1 \triangleright \kappa$ be a regular cardinal such that for each $X \in \ca$, $F_i(X) \in \cn$ is $\kappa_1$-presentable in $\cn$ for all $i \in I$. We claim that each $\lambda \triangleright \kappa_1$ has the required property. We 
recall that for each regular cardinal $\mu$, we have $\mu \triangleleft \mu^+$ where $\mu^+$ denotes the cardinal successor of $\mu$. We refer to \cite[2.11-2.13]{AR} for the definition and properties of the relation $\triangleright$. 

Let $X \in \cb$ be $\lambda$-presentable in $\cm$. By assumption, $X$ is a $\kappa$-directed 
colimit of a diagram $D: P \to \cm$ whose values are in $\ca$. We consider the new poset $\hat{P}$ of all $\kappa$-directed $\lambda$-small subsets of $P$ ordered by inclusion. 
Then $\hat{P}$ is $\lambda$-directed. We define a new diagram $\hat{D}: \hat{P} \to \cm$ which sends a subset $Q \subseteq P$ to the colimit in $\cm$ 
of the restriction of $D$ to $Q$. This defines a $\lambda$-directed diagram $\hat{D}$ whose colimit is again $X$. As a consequence, the identity of $X$ factors through some stage of this 
diagram $\hat{D}$ and therefore $X$ is a split subobject of a $\lambda$-small $\kappa$-directed colimit of objects in $\ca$. For each $i \in I$, $F_i(X)$ is then a 
split subobject of a $\lambda$-small $\kappa$-directed colimit of $\kappa_1$-presentable objects in $\cn$. Therefore $F_i(X)$ is $\lambda$-presentable for every $i \in I$, as required. 
\end{proof}

Secondly, the argument that was used in the proofs of Theorem \ref{swfs-general case}(3) and Corollary \ref{swfs-factorizations}(1) generalizes to obtain the following more general 
result about functors between preaccessible categories under the assumption of Vop\v{e}nka's principle. It is an analogue in
this context of the Uniformization Theorem for accessible functors (see \cite[Theorem 2.19]{AR}).

\begin{proposition} \label{uniformization}
Let $F_i: \cm_i \to \cn$, $i \in I$, be a small collection of functors between preaccessible categories $\cm_i$, $i \in I$, and $\cn$. 
Assume that Vop\v{e}nka's principle holds. Then there are arbitrarily large regular cardinals $\lambda$ such that 
each functor $F_i$ preserves $\lambda$-presentable objects.
\end{proposition}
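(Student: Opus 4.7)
The plan is to adapt the closed-and-unbounded-class-of-cardinals argument that already appears in the proofs of Theorem \ref{swfs-general case}(3) and Corollary \ref{swfs-factorizations}(1). I would first use that each preaccessible category has a small dense subcategory (\cite[Proposition 1.26, Theorem 2.65]{AR}) to realize each of $\cm_i$ and $\cn$ as a full subcategory of the category of graphs $\Gra$. Assuming Vop\v{e}nka's principle, \cite[Theorem 6.9]{AR} supplies a single regular cardinal $\mu_0$ such that all of these embeddings (a small collection, since $I$ is small) preserve $\mu_0$-filtered colimits. I will write $|X|$ for the cardinality of the underlying graph of $X$.

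The next step assembles a closed unbounded class of ``good'' cardinals in two parts. Following the proof of \cite[Theorem 1]{AR1}, the class $\C_1$ of cardinals $\alpha \geq \mu_0$ with the property that every subobject in $\Gra$ of cardinality $< \alpha$ of an object of $\cm_i$ or $\cn$ extends to a strong subobject of the ambient category of cardinality $< \alpha$ is closed and unbounded. For $\alpha \geq \mu_0$, let $\alpha^\ast \geq \alpha$ be the least cardinal such that $|F_i(X)| < \alpha^\ast$ for every $i \in I$ and every $X \in \cm_i$ with $|X| < \alpha$; since $I$ is small and isomorphism classes of such $X$ form a set, $\alpha^\ast$ is well-defined, and the class $\C_2 = \{\alpha \geq \mu_0 : \alpha = \alpha^\ast\}$ is closed and unbounded by the usual verification. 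Hence so is $\C = \C_1 \cap \C_2$. Under Vop\v{e}nka's principle the class of regular cardinals is stationary by \cite[Theorem 3]{Ma}, so $\C$ contains arbitrarily large regular cardinals.

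Fix such a regular $\lambda \in \C$. By the defining property of $\C_1$, every $X \in \cm_i$ is covered by a $\lambda$-directed diagram of its strong subobjects in $\cm_i$ of cardinality $< \lambda$, and since $\cm_i \hookrightarrow \Gra$ is fully faithful and the colimit in $\Gra$ of this diagram is $X$ itself, the colimit is also computed in $\cm_i$. If $X$ is $\lambda$-presentable in $\cm_i$, the identity of $X$ factors through some such strong subobject $Z \subseteq X$, realizing $X$ as a retract of $Z$. Applying $F_i$, we obtain $F_i(X)$ as a retract of $F_i(Z)$, and $|F_i(Z)| < \lambda$ by $\lambda \in \C_2$. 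Since $\cn \hookrightarrow \Gra$ is fully faithful and preserves $\lambda$-filtered colimits, $F_i(Z)$ is $\lambda$-presentable in $\cn$, and hence so is $F_i(X)$ as a retract. The main obstacle is the retract step, which relies crucially on the strong-subobject approximating class $\C_1$ from the proof of \cite[Theorem 1]{AR1} and on the stationarity of regular cardinals under Vop\v{e}nka's principle; the bookkeeping with $\mu_0$ and $\alpha^\ast$ is routine once $I$ is taken to be small.
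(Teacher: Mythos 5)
Your proposal is correct and follows essentially the same route as the paper: embed everything fully into $\Gra$ via small dense subcategories, use \cite[Theorem 6.9]{AR} under Vop\v{e}nka's principle, intersect the closed unbounded class $\C_1$ from the proof of \cite[Theorem 1]{AR1} with the closed unbounded class $\C_2$ controlling the growth of $|F_i(X)|$, and invoke the stationarity of the regular cardinals. The only cosmetic difference is that you carry the retract of the strong subobject $Z$ through $F_i$ explicitly, whereas the paper first concludes $|X|<\lambda$ and applies $\C_2$ to $X$ directly; both are valid.
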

\begin{proof}
As in the proof of Theorem \ref{swfs-general case}(3), we may assume that $\cm_i$, $i \in I$, and $\cn$ are full subcategories of the category of graphs $\Gra$ since each of them has a small dense subcategory. Assuming Vop\v{e}nka's principle, the inclusions $\cm_i \to \Gra$, $i \in I$, and $\cn \to \Gra$ preserve $\lambda_0$-directed colimits for some $\lambda_0$, by \cite[Theorem 6.9]{AR}. 
For $X \in \Gra$, $|X|$ denotes the cardinality of the underlying set of the graph $X$. As before, given a regular
cardinal $\alpha \geq \lambda_0$ and $X \in \cm_i$ ($X \in \cn$),  $|X|<\alpha$ implies that $X$ is $\alpha$-presentable in $\cm_i$ (in $\cn$). Moreover, as already explained in the proof of Corollary \ref{swfs-factorizations}(1), the proof of \cite[Theorem 1]{AR1} shows how to define a closed and unbounded class of cardinals $\C_1$ with the property that for each regular cardinal $\lambda \in \C_1$, we have:
(a) for any $i \in I$, $X \in \cm_i$ is $\lambda$-presentable if and only if $|X| < \lambda$, and (b) $Y \in \cn$ is $\lambda$-presentable 
if and only if $|Y| < \lambda$. 

For \emph{any} cardinal $\alpha$, let $\alpha^\ast \geq \alpha$ be the smallest cardinal such that for 
each $X \in \cm_i$ with $|X| <\alpha$, then $|F_i (X)|<\alpha^\ast$ for every $i \in I$. Let $\C_2$ be the class of all cardinals $\alpha \geq \lambda_0$ such that $\alpha=\alpha^\ast$. The class $\C_2$ is closed and unbounded (similarly to the proof in 
\cite[Theorem 1]{AR1}). The intersection $\C_3 = \C_1 \cap \C_2$ is again closed and unbounded. Since the class of all regular cardinals 
is stationary under Vop\v enka's principle, it follows that 
$\C_3$ contains a regular cardinal $\lambda$ (see \cite{Ma}, \cite{AR1}). This $\lambda$ has the required 
property. Since $\lambda_0$ can be chosen to be arbitrarily large, the same applies to $\lambda$, too. 

\end{proof}

\begin{example}
Let $\mathscr{P} \colon \Set \to \Set$ be the covariant power set functor. For an uncountable regular cardinal $\lambda$, 
the functor $\mathscr{P}$ preserves $\lambda$-presentable objects in $\Set$ if and only if $\lambda$ is inaccessible. Applying 
Proposition \ref{uniformization} to this functor, we can then deduce the well known fact that Vop\v{e}nka's principle implies 
the existence of arbitrarily large inaccessible cardinals (see \cite[Appendix]{AR}). In fact, a careful examination of the arguments 
shows that the existence of arbitrarily large inaccessible cardinals 
follows also from the (strictly weaker) assertion that the class of all regular cardinals is stationary. The fact that this assertion is a consequence of Vop\v{e}nka's principle is easily deduced from \cite[Theorem 3]{Ma}.
\end{example}

\subsection{Weak equivalences and filtered colimits} 

\medskip

Our next goal is to find general conditions under which the class of weak equivalences in a model category $\cm$ is closed under large enough filtered colimits. 
The strategy follows the proof of \cite[Proposition 4.1]{RR}. We begin with the following lemma about filtered colimits of trivial fibrations. 

\begin{lemma}\label{triv-fib}
Let $(\cm, \mathcal{C}of, \mathcal{W}, \mathcal{F}ib)$ be a model category. Suppose that $(\mathcal{C}of, \cw \cap \mathcal{F}ib)$ is strictly cofibrantly generated. Then, assuming Vo\-p\v en\-ka's principle, there is a regular 
cardinal $\lambda$ such that given a $\lambda$-filtered diagram $F: P \to \cm_c^{\to}$ whose values are trivial fibrations in $\cm$ and $\colim_P F \in \cm_c^{\to}$, then $\colim_P F$  is again a trivial fibration.
\end{lemma}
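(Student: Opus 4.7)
The plan is that trivial fibrations in $\cm$ are characterized as $\cx^{\square}$, so to show the colimit $p_\infty \colon X_\infty \to Y_\infty$ is a trivial fibration it suffices to solve every lifting problem against each $j \colon A \to B$ in $\cx$. The idea is to descend such a lifting problem to a stage $p_i$ of the diagram, solve it there using that $p_i$ is already a trivial fibration, and then push the lift back up to the colimit.

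First I would choose $\lambda$ carefully. Let $\cx$ be a generating set for $\cw \cap \mathcal{F}ib$ witnessing strict cofibrant generation, with smallness rank $\kappa$; we may assume (replacing $\cx$ if necessary by factoring $0 \to A$ via the weak factorization system $(\mathcal{C}of, \cw \cap \mathcal{F}ib)$) that the domains and codomains of its morphisms are cofibrant, so that they lie in $\ca(\cx, \lambda)$ for $\lambda \geq \kappa$. By Corollary \ref{swfs-special case}(3), under Vop\v{e}nka's principle there are arbitrarily large regular cardinals $\lambda \geq \kappa$ such that every object of $\ca(\cx, \lambda)$ is $\lambda$-presentable in $\cm_c$. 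Fix such a $\lambda$.

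Now let $F \colon P \to \cm_c^{\to}$ be a $\lambda$-filtered diagram with values trivial fibrations $p_i \colon X_i \to Y_i$ and colimit $p_\infty$ lying in $\cm_c^{\to}$. I would consider an arbitrary lifting problem for $p_\infty$ against some $j \colon A \to B$ in $\cx$, given by maps $u \colon A \to X_\infty$ and $v \colon B \to Y_\infty$. Since $X_\infty, Y_\infty \in \cm_c$ and $\cm_c \subseteq \cm$ is full, the colimit cocones $X_i \to X_\infty$ and $Y_i \to Y_\infty$ are colimits in $\cm_c$ as well. The $\lambda$-presentability of $A$ and $B$ in $\cm_c$, combined with the $\lambda$-filteredness of $P$, yields a common index $i \in P$ and maps $u_i \colon A \to X_i$, $v_i \colon B \to Y_i$ whose composites with the colimit maps recover $u$ and $v$. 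The square at stage $i$ need not commute, but $p_i u_i$ and $v_i j$ agree after postcomposition with $Y_i \to Y_\infty$, so by $\lambda$-presentability of $A$ in $\cm_c$ they already agree at some later index $i' \geq i$. Because $p_{i'}$ is a trivial fibration and $j \in \cx$, the now-commutative square at $i'$ admits a diagonal filler $h \colon B \to X_{i'}$, and composing with $X_{i'} \to X_\infty$ produces the required lift in the original problem.

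The main obstacle is the first step: obtaining presentability of the generators in $\cm_c$, rather than the strictly weaker condition of strict smallness relative to $\cx$ that is provided by hypothesis. Strict smallness governs behavior only along good colimits whose links are $\cx$-cellular, whereas the descent argument concerns a filtered colimit of trivial fibrations, which is not cellular in any useful sense. What is needed is the factorization property for hom-sets $\cm_c(A, -)$, $\cm_c(B, -)$ along the given $\lambda$-filtered colimits in $\cm_c$, and this is precisely what the preaccessibility of $\cm_c$ proved in Theorem \ref{swfs-general case}(3) supplies. This is the point at which Vop\v{e}nka's principle is essential.
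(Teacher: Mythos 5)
Your overall strategy---characterizing trivial fibrations as $\cx^{\square}$ and descending a lifting problem against $j\colon A \to B$ in $\cx$ to a stage of the diagram using $\lambda$-presentability of $A$ and $B$---is the same one the paper uses implicitly. The gap is in how you arrange that presentability. You propose to replace $\cx$ by a set whose domains and codomains are cofibrant, by factoring $0 \to A$ through $(\mathcal{C}of, \cw \cap \mathcal{F}ib)$. This reduction is not justified. If $q\colon A^c \to A$ is the resulting trivial fibration and you factor $A^c \to A \to B$ as a cofibration $j^c\colon A^c \to B^c$ followed by a trivial fibration $B^c \to B$, then $\cw \cap \mathcal{F}ib \subseteq \{j^c\}^{\square}$ holds because $j^c$ is a cofibration, but the reverse inclusion $\{j^c\}^{\square} \subseteq \cx^{\square}$ fails in general: a solution of the transported lifting problem lives on $B^c$, and there is no way to push it along the wrong-way map $B^c \to B$ unless $B$ (hence $A$) was already cofibrant. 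So the replaced set need not be a generator for $\cw \cap \mathcal{F}ib$ at all; and even if it were, there is no reason its domains should be strictly small relative to it. Whether a (strictly) cofibrantly generated weak factorization system admits a generator with cofibrant domains is precisely the kind of delicate question the paper takes care not to assume away.

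The paper's proof sidesteps this: instead of forcing the domains and codomains of $\cx$ into $\cm_c$, it enlarges $\cm_c$ to the full subcategory $\ck \subseteq \cm$ obtained by adjoining those objects. The union of the domains and codomains of $\cx$ with $\ca(\cx,\kappa)$ is a small colimit-dense subcategory of $\ck$, so the Vop\v{e}nka-principle argument from the proof of Theorem \ref{swfs-general case}(3) applies verbatim to $\ck$ and produces a regular cardinal $\lambda$ for which the domains and codomains of $\cx$ are $\lambda$-presentable \emph{in $\ck$}. Since $\ck$ is a full subcategory of $\cm$ containing $\cm_c$, a $\lambda$-filtered colimit computed in $\cm$ whose value lands in $\cm_c$ is also a colimit in $\ck$, and your descent argument then goes through unchanged with $\ck$ in place of $\cm_c$ (including your correct handling of the square that only commutes after passing to a later index). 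With that substitution your proof matches the paper's; without it, the first step does not stand.
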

\begin{proof}
Let $\cx$ be a generating set  for $(\mathcal{C}of, \cw \cap \mathcal{F}ib)$ whose domains are strictly $\kappa$-small and let $\ca = \ca(\cx, \kappa)$ be the associated set of objects. 
By Theorem \ref{swfs-general case}, $\ca$ is a small colimit-dense subcategory of $\cm_c$.
Let $\ck$ be the full subcategory of $\cm$ obtained by adding the domains and the codomains of the morphisms in $\cx$ to $\cm_c$. The union of the 
domains and codomains of $\cx$ with $\ca$ defines a small colimit-dense subcategory of $\ck$. Using the same argument as in the proof of Theorem \ref{swfs-general case}(3), it follows 
that every object of $\ck$ is presentable in $\ck$. Then there exists a regular cardinal $\lambda$ such that the domains and the codomains of the morphisms in $\cx$ are 
$\lambda$-presentable in $\ck$. Consequently, trivial fibrations between cofibrant objects are closed under $\lambda$-filtered colimits which exist in $\ck$. In particular, they are 
closed under those $\lambda$-filtered colimits that come from colimits in $\cm$. 
\end{proof}

\begin{remark} \label{triv-fib2}
Note that an easier and direct argument shows that if $(\mathcal{C}of, \cw \cap \mathcal{F}ib)$ is perfectly cofibrantly generated, then the conclusion of Lemma \ref{triv-fib} holds 
for all trivial fibrations without the assumption of Vo\-p\v en\-ka's principle.
\end{remark}

\begin{remark}\label{fib-case}
There is an analogous statements for fibrations assuming instead that $(\mathcal{C}of \cap \cw, \mathcal{F}ib)$ is strictly cofibrantly generated. 
The proof is similar to the proof of Lemma \ref{triv-fib}. 
\end{remark}

\begin{proposition}\label{weak-eq} 
Let $(\cm, \mathcal{C}of, \mathcal{W}, \mathcal{F}ib)$ be a model category. Suppose that $(\mathcal{C}of, \cw \cap \mathcal{F}ib)$ is strictly cofibrantly generated. Then, 
assuming Vo\-p\v en\-ka's principle, there is a regular cardinal $\lambda$ such that given a $\lambda$-filtered diagram 
$F: P \to \cm_c^{\to}$ whose values are weak equivalences and $\colim_P F \in \cm_c^{\to}$, then $\colim_P F$ is again 
a weak equivalence. 
\end{proposition}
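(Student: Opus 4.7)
The strategy follows the proof of \cite[Proposition 4.1]{RR}: use the functorial factorization associated to $(\mathcal{C}of, \cw \cap \mathcal{F}ib)$ to decompose each $f_p$ as a trivial cofibration followed by a trivial fibration, and reduce the claim to Lemma \ref{triv-fib}. I would first choose $\lambda$ large enough that the conclusion of Lemma \ref{triv-fib} holds at level $\lambda$ and that the other presentability and smallness conditions needed later are met simultaneously; via Vop\v{e}nka's principle together with Proposition \ref{uniformization} (cf.\ Remark \ref{same-cardinal}) one can select a single such $\lambda$.

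Applying the functorial factorization pointwise on $F$, one obtains natural transformations $X_{(-)} \Rightarrow Z_{(-)} \Rightarrow Y_{(-)}$ with $j_p \colon X_p \to Z_p$ in $\mathcal{C}of$ and $q_p \colon Z_p \to Y_p$ in $\cw \cap \mathcal{F}ib$. Cofibrancy of $X_p$ and $j_p \in \mathcal{C}of$ forces $Z_p \in \cm_c$; the 2-out-of-3 property applied to $f_p = q_p j_p$ then gives $j_p \in \cw$, so each $j_p$ is a trivial cofibration. Passing to colimits yields $\colim_P f_p = \colim_P q_p \circ \colim_P j_p$, and the middle object $\colim_P Z_p$ is cofibrant since $\mathcal{C}of = \mathrm{cof}(\cx)$ is closed under the relevant filtered colimits (each transition is built as a pushout of a cofibration). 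Lemma \ref{triv-fib} then gives $\colim_P q_p \in \cw \cap \mathcal{F}ib$. By 2-out-of-3, it remains to show $\colim_P j_p \in \cw$.

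Establishing that $\colim_P j_p$ is a weak equivalence is the main obstacle. Since each $j_p$ is produced by the (fat) small object argument from $\cx$ (Theorem \ref{fsoa}), it has a canonical presentation as a $\lambda$-good $\lambda$-directed composite of pushouts of morphisms of $\cx$, and $\colim_P j_p$ inherits an analogous cellular presentation. One then argues by transfinite induction through this cellular filtration, at each stage using that cofibrancy of the intermediate objects is preserved, that trivial fibrations between them behave well under the $\lambda$-filtered colimits indexing $P$ by Lemma \ref{triv-fib}, and applying 2-out-of-3 to propagate the weak equivalence property stage by stage. Combined with the already-established fact that $\colim_P q_p \in \cw \cap \mathcal{F}ib$, a final application of 2-out-of-3 yields $\colim_P f_p \in \cw$. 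The role of Vop\v{e}nka's principle is twofold: it enters via the uniform choice of $\lambda$ (Proposition \ref{uniformization}) and via the preaccessibility of $\cm_c$ (Theorem \ref{swfs-general case}(3)) that underpins the required presentability statements.
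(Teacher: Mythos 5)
Your overall shape of argument (factor each $f_p$, dispose of the trivial-fibration half via Lemma \ref{triv-fib}, finish by 2-out-of-3) is the same as the paper's, but the way you execute the factorization leaves two genuine gaps, and the second one is exactly the crux of the proposition. First, you factor \emph{pointwise} in $\cm$ and then claim $\colim_P Z_p$ is cofibrant because ``each transition is built as a pushout of a cofibration'': this is not so. The transition maps $Z_p \to Z_{p'}$ are induced by functoriality of the factorization applied to a square whose legs are arbitrary morphisms, and they are not pushouts of morphisms of $\cx$ (nor cofibrations) in general, so closure of $\mathrm{cof}(\cx)$ under transfinite composition does not apply. Second, and more seriously, the step you yourself flag as ``the main obstacle'' is not actually carried out: $\colim_P j_p$ is a colimit \emph{in the arrow category} of pointwise trivial cofibrations, which is not a transfinite composition and is not covered by the closure properties of ${}^{\square}\mathcal{F}ib$; the proposed ``transfinite induction through the cellular filtration \dots applying 2-out-of-3 stage by stage'' does not produce an argument, since there are no trivial fibrations appearing at the cellular stages to which Lemma \ref{triv-fib} could be applied, and 2-out-of-3 gives no way to propagate ``weak equivalence'' across a filtered colimit of stages.

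The paper resolves precisely this point by a different placement of the factorization. One first reduces, via Remark \ref{cofinal}, to the case where $P$ is a $\lambda$-directed \emph{good} poset; then $P$ is a direct category, $\cm^P$ carries the model structure with pointwise weak equivalences and fibrations (\cite[Theorem 5.1.3]{Ho}), and $\colim_P \colon \cm^P \to \cm$ is a \emph{left Quillen functor}. The natural transformation $\phi \colon F \to G$ is factored \emph{in $\cm^P$} as a (Reedy) trivial cofibration $\iota \colon F \to T$ followed by a pointwise trivial fibration $\pi \colon T \to G$. Because $\colim_P$ is left Quillen, $\colim_P(\iota)$ is a trivial cofibration in $\cm$ outright --- this single observation both settles the left half and shows $\colim_P T$ is cofibrant (as $\colim_P F$ is cofibrant by hypothesis) --- and then Lemma \ref{triv-fib} applies to $\pi$ to settle the right half. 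You should replace your pointwise factorization and the cellular induction by this reduction to a good poset and the Reedy factorization in $\cm^P$; without it the proof does not close.
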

\begin{proof}
Let $\lambda$ be a regular cardinal that satisfies the property stated in Lemma \ref{triv-fib}. It suffices to prove the claim
for $\lambda$-directed colimits. Following Remark \ref{cofinal}, for every $\lambda$-directed category $\cd$, there is a cofinal 
functor $\cd_0 \to \cd$ from a $\lambda$-directed good poset $\cd_0$, so it suffices 
to prove the claim for $\lambda$-directed good colimits.  

Let $P$ be a $\lambda$-directed good poset. The category $\cm^P$ of $P$-diagrams in
$\cm$ admits a model structure where the weak equivalences and the fibrations are defined pointwise (see \cite[Theorem 5.1.3]{Ho}). Furthermore, every cofibration in $\cm^P$ is also a pointwise cofibration. Then the colimit-functor 
$$\mathrm{colim}_P: \cm^P \to \cm$$
is a left Quillen functor.  

Let $F, G: P \to \cm_c$ be two pointwise cofibrant $P$-diagrams whose colimits 
are in $\cm_c$ and $\phi: F \to G$ a natural transformation which is a pointwise weak equivalence. There is a factorization of $\phi$ in $\cm^P$
$$F \stackrel{\iota}{\longrightarrow} T \stackrel{\pi}{\longrightarrow} G$$ 
such that $\iota$ is a trivial cofibration and $\pi$ is a trivial fibration. The diagram $T: P \to \cm$ is also pointwise cofibrant. Since $\mathrm{colim}_P$ is a 
left Quillen functor, the induced morphism $$\mathrm{colim}_P(F) \to \mathrm{colim}_P (T)$$
is a trivial cofibration. Therefore $\mathrm{colim}_P(T)$ is also cofibrant. By Lemma \ref{triv-fib}, the induced morphism 
$$\mathrm{colim}_P (T) \to \mathrm{colim}_P (G)$$
is a trivial fibration (between cofibrant objects). Hence $\mathrm{colim}_P(\phi)$ is a weak equivalence as required.
\end{proof}

\begin{remark} \label{weak-eq2}
Following Remark \ref{triv-fib2}, if $(\mathcal{C}of, \cw \cap \mathcal{F}ib)$ is perfectly cofibrantly generated, then the conclusion of Proposition 
\ref{weak-eq} holds for all weak equivalences without the assumption of Vop\v enka's principle.
\end{remark}

\begin{corollary} \label{homotopy-colim}
Let $(\cm, \mathcal{C}of, \mathcal{W}, \mathcal{F}ib)$ and $\lambda$ be as in Proposition \ref{weak-eq}. Let $F: P \to \cm_c$ be a $\lambda$-filtered diagram such that $\colim_P F$ is cofibrant. Then the canonical morphism 
$$c: \hocolim_P F \longrightarrow \colim_P F$$
is a weak equivalence.
\end{corollary}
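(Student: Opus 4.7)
The plan is to realise $\hocolim_P F$ as $\colim_P \widetilde{F}$ for a projective cofibrant replacement $\widetilde{F} \to F$, and then invoke Proposition \ref{weak-eq}. First I would fix a cofibrant replacement $q \colon \widetilde{F} \to F$ of $F$ in the projective model structure on $\cm^P$, which exists by \cite[Theorem 5.1.3]{Ho} (the very model structure used in the proof of Proposition \ref{weak-eq}). Then $q$ is a pointwise trivial fibration and $\widetilde{F}$ is projectively cofibrant and hence pointwise cofibrant. Interpreting $\hocolim_P F$ as a value of the total left derived functor of $\colim_P$, we identify $\hocolim_P F$ with $\colim_P \widetilde{F}$, and the canonical morphism $c$ with the morphism $\colim_P q \colon \colim_P \widetilde{F} \to \colim_P F$.

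Next I would verify that $\colim_P q$ lies in $\cm_c^{\to}$. Since $\colim_P \colon \cm^P \to \cm$ is left Quillen and $\widetilde{F}$ is projectively cofibrant, $\colim_P \widetilde{F}$ is cofibrant; combined with the hypothesis that $\colim_P F$ is cofibrant, this places $\colim_P q$ in $\cm_c^{\to}$.

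Finally I would view $q$ as a $\lambda$-filtered diagram $P \to \cm_c^{\to}$ whose value at each $p \in P$ is the weak equivalence $\widetilde{F}(p) \to F(p)$ between cofibrant objects, and whose colimit in $\cm^{\to}$ is the morphism $\colim_P q$ in $\cm_c^{\to}$. Proposition \ref{weak-eq} then yields that $\colim_P q$, and therefore $c$, is a weak equivalence. The main (mild) obstacle is a bookkeeping check that the colimit of the natural transformation $q$ taken in $\cm^P$ agrees with the colimit in $\cm^{\to}$ of the corresponding $P$-indexed diagram of arrows, so that the hypotheses of Proposition \ref{weak-eq} genuinely apply; once this is unwound, the argument is essentially a formal consequence of Proposition \ref{weak-eq} and the Quillen adjunction $(\colim_P, \mathrm{const})$.
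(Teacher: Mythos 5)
Your argument is essentially the paper's own proof: take a projective cofibrant replacement $\widetilde{F} \to F$ in $\cm^P$, identify $\hocolim_P F$ with $\colim_P \widetilde{F}$ and $c$ with $\colim_P q$, observe that $\colim_P \widetilde{F}$ is cofibrant because $\colim_P$ is left Quillen, and feed the pointwise weak equivalence $q$ into Proposition \ref{weak-eq}. The one step you skip, and which the paper does first, is the reduction (by homotopy cofinality, via Remark \ref{cofinal}) to the case where $P$ is a $\lambda$-directed \emph{good poset}. This is not mere bookkeeping: the model structure on $\cm^P$ you invoke from \cite[Theorem 5.1.3]{Ho} exists because a good poset is a direct category, whereas for an arbitrary $\lambda$-filtered index category $P$ the projective model structure on $\cm^P$ is not available here --- $\cm$ is not assumed cofibrantly generated as a model category, so there is no small object argument to produce the projective factorizations. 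Once you insert that cofinality reduction at the start (exactly as in the proof of Proposition \ref{weak-eq}, whose statement likewise only needs to be verified on good posets), the rest of your argument, including the check that $\colim_P q$ lands in $\cm_c^{\to}$ and that the hypotheses of Proposition \ref{weak-eq} apply, goes through and coincides with the paper's proof.
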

\begin{proof}
Similarly to the proof of Proposition \ref{weak-eq}, that is, by (homotopy) cofinality, we may assume that $P$ is a $\lambda$-directed good poset. Let 
$\widetilde{F}: P \to \cm_c$ be a cofibrant 
replacement of $F$ in the model category $\cm^P$ where weak equivalences and fibrations are defined pointwise, and let $\eta \colon \widetilde{F} \to F$ 
denote the weak equivalence in $\cm^P$. Then it suffices to show that the canonical morphism 
$$\colim_P(\eta) \colon \colim_P \widetilde{F} \to \colim_P F$$
is a weak equivalence. Note that $\colim_P \widetilde{F}$ is cofibrant. Then the result follows from Proposition \ref{weak-eq}.
\end{proof}

\section{Small Presentations of Model Categories} \label{section4}

A \textit{small presentation} of a model category $\cm$ consists of a small category $\cc$, a set of morphisms $S$ in the projective model category $\mathcal{SS}et^{\cc^{\op}}$, and a Quillen equivalence 
$$F: L_S \mathcal{SS}et^{\cc^{\op}} \rightleftarrows \cm :G.$$
Here $\mathcal{SS}et$ denotes the usual combinatorial model category of simplicial sets and $L_S \SSet^{\cc^{\op}}$ the left Bousfield localization of the projective model category $\SSet^{\cc^{\op}}$ at the set $S$. It is 
well known that $\SSet^{\cc^{\op}}$ is a (proper, simplicial) combinatorial model category. Moreover, when $S$ is a set of morphisms, 
the left Bousfield localization $L_S \SSet^{\cc^{\op}}$ exists and defines a new (left proper, simplicial) combinatorial model category.
The notion of a small presentation for model categories was introduced and studied  by Dugger \cite{D, D1}. It is
the model-categorical analogue of the notion of a presentation of a locally presentable category as a small orthogonality class in a presheaf category. 

In this section, we obtain results about the existence of small presentations for model categories - assuming Vop\v{e}nka's principle or not. 
As a consequence, we conclude that suitable model categories are Quillen equivalent to a (left proper, simplicial) combinatorial model category. 
The strategy for obtaining such small presentations follows Dugger \cite{D1} who showed that every combinatorial model category admits a small
presentation.

\subsection{Dugger's method} \label{method}

\medskip

Let $\cc$ be a small category and $\SSet^{\cc^{\op}}$ the projective model category of simplicial presheaves on $\cc$ where the weak equivalences and the fibrations are defined pointwise. 
Given a cocomplete category $\cm$, a left adjoint functor 
$F: \SSet^{\cc^{\op}} \to \cm$ is determined, uniquely up to canonical isomorphism, by its restriction $$f = F \circ y: \cc \times \Delta \to \cm$$ where 
$y: \cc \times \Delta \to \SSet^{\cc^{\op}}=\Set^{(\cc \times \Delta)^{\op}}$ is the Yoneda embedding. 

Let $\cm$ be a model category and $f: \cc \times \Delta \to \cm$ a diagram. The associated left adjoint functor $F: \SSet^{\cc^{\op}} \to \cm$ is left Quillen if and only if for each 
$c \in \cc$, the cosimplicial object $f(c, [\bullet]) \in c \cm : = \cm^{\Delta}$ is a cosimplicial resolution in $c \cm$, i.e., it is Reedy cofibrant and homotopically constant, see \cite[Proposition 3.4]{D}. Such a cosimplicial resolution is determined up to 
a contractible space of choices by its restriction to degree $0$, i.e., the diagram $f(-, [0]): \cc \to \cm, \ c \mapsto f(c, [0])$. 

These observations suggest the following general way of constructing Quillen adjunctions $\SSet^{\cc^{\op}} \rightleftarrows \cm$ (see also \cite[Section 6]{D1}). Let $C$ be a small full 
subcategory of $\cm_c$ and let $\Delta C$ denote the small full subcategory of $c \cm$ which consists of cosimplicial resolutions $A^{\bullet} \in c \cm$ such that $A^n \in C$ for all $n$. 
Then the functor 
$f: \Delta C \times \Delta \to \cm$, $(A^{\bullet}, [n]) \mapsto A^n$, extends to a Quillen adjunction
$$F_C: \SSet^{\Delta C^{\op}} \rightleftarrows \cm: G_C.$$
We will refer to this Quillen adjunction as the \emph{canonical Quillen adjunction which is generated by $C \subset \cm_c$}. 

\medskip 

The first step towards finding a small presentation for $\cm$ is to find a left Quillen functor $F: \SSet^{\cc^{\op}} \to 
\cm$ which defines a homotopy reflection in the following sense.

\begin{definition}
A Quillen adjunction $F: \SSet^{\cc^{\op}} \rightleftarrows \cm: G$ is a \textit{homotopy reflection} if for every fibrant object 
$X \in \cm$ and cofibrant replacement $G(X)^{c} \stackrel{\sim}{\to} G(X)$, the induced morphism $F(G(X)^{c}) 
\to X$ is a weak equivalence. That is, the induced derived adjunction $\mathbb{L}F: \Ho(\SSet^{\cc^{\op}}) \rightleftarrows 
\Ho(\cm): \mathbb{R}G$ defines a reflection. 
\end{definition}

Such a left Quillen functor $F$ is called \emph{homotopically surjective} in \cite{D1}. 
Following \cite{D1}, one can identify the derived counit transformation of a Quillen adjunction $F: \SSet^{\cc^{\op}} \rightleftarrows \cm: G$ 
as follows. Let $f = F \circ y : \cc \times \Delta \to \cm$ be the restriction along the Yoneda embedding. For each $X \in \cm$, 
we have the canonical diagram 
$$\cc \times \Delta \downarrow X \longrightarrow \cm, \ (c, [n]) \mapsto f(c, [n]).$$
Let $\mathrm{hocolim}(\cc \times \Delta \downarrow X)$ denote its homotopy colimit. 
In \cite[4.2-4.4]{D1}, the derived counit transformation of $(F, G)$ is identified with the canonical natural morphism 
$$q_X: \mathrm{hocolim}(\cc \times \Delta \downarrow X) \to X.$$
Hence a Quillen adjunction $F: \SSet^{\cc^{\op}} \rightleftarrows \cm: G$ is a homotopy reflection if and only if the canonical morphism 
$q_X$ is a weak equivalence for all fibrant $X$. 

\medskip

Using this identification and combining several results from \cite{D1}, we obtain the following criterion for a canonical Quillen 
adjunction $(F_C, G_C)$ to be a homotopy refection. We recall that the category $c \cm$ has a simplicial enrichment and it is tensored 
and cotensored over simplicial sets. For every cosimplicial resolution $A^{\bullet} \in c\cm$ and simplicial 
set $K$, the cosimplicial object $A^{\bullet} \otimes K$ is again a cosimplicial resolution (see \cite[Proposition 4.8]{D2}). 

\begin{proposition} \label{auxiliary-comparison}
Let $\cm$ be a model category, $C$ be a small full subcategory of $\cm_c$ and $(F_C, G_C): \SSet^{\Delta C^{\op}} \rightleftarrows \cm$ the associated
canonical Quillen adjunction. Suppose that the following are satisfied:
\begin{itemize}  
 \item[(a)] there is a functorial Reedy cofibrant replacement for cosimplicial objects in $C$, that is, a functor $R \colon c \cc 
\to c \cc$ such that $R(X^{\bullet})$ is Reedy cofibrant and a natural weak equivalence $\eta \colon R(X^{\bullet}) \to X^{\bullet}$. Moreover, we require that $R(X^{\bullet})^0 = X^0$ and $\eta$ is the identity in degree $0$. 
 \item[(b)] for each $A^{\bullet} \in \Delta C$, then the cosimplicial resolution $A^{\bullet} \otimes \Delta^1$ is again in $\Delta C$.
\end{itemize}
Then $(F_C, G_C)$ is a homotopy reflection if and only if the canonical morphism 
$$\hocolim(C \downarrow X) \to X$$
is a weak equivalence for each fibrant-cofibrant object $X$.
\end{proposition}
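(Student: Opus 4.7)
The plan is to identify both conditions of the proposition with statements about the derived counit of $(F_C, G_C)$, and then to bridge the two via an explicit comparison functor. By \cite[4.2--4.4]{D1}, the Quillen pair $(F_C, G_C)$ is a homotopy reflection if and only if, for every fibrant $X \in \cm$, the canonical morphism
\[
q_X \colon \hocolim(\Delta C \times \Delta \downarrow X) \longrightarrow X
\]
is a weak equivalence. For a fibrant $X$ with cofibrant replacement $X^c \to X$, the naturality of $q$ together with 2-out-of-3 implies that $q_X$ is a weak equivalence if and only if $q_{X^c}$ is one; so we may restrict throughout to fibrant-cofibrant $X$.

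By (a), the functor $r \colon C \to \Delta C$, $r(A) = R(A^\bullet_{\mathrm{const}})$, is well defined: the natural weak equivalence $\eta \colon R(A^\bullet_{\mathrm{const}}) \to A^\bullet_{\mathrm{const}}$ shows that $r(A)$ is homotopically constant, while $r(A)^0 = A$ by the hypothesis on $R$, so $r(A)$ is a cosimplicial resolution in $\Delta C$. This yields a slice functor
\[
\iota \colon C \downarrow X \longrightarrow \Delta C \times \Delta \downarrow X, \qquad (A, u) \longmapsto (r(A), [0], u),
\]
and its composition with $(A^\bullet, [n]) \mapsto A^n$ recovers the diagram $A \mapsto A$ on $C \downarrow X$. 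Denoting by $p_X$ the canonical morphism of the proposition, we obtain a commutative triangle
\[
\xymatrix{
\hocolim(C \downarrow X) \ar[rr]^-{\iota_{\ast}} \ar[dr]_-{p_X} && \hocolim(\Delta C \times \Delta \downarrow X) \ar[dl]^-{q_X} \\
& X &
}
\]
By 2-out-of-3, the desired equivalence $p_X \simeq q_X$ then follows once $\iota_{\ast}$ is shown to be a weak equivalence.

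To show $\iota_{\ast}$ is a weak equivalence, the plan is to exhibit $\iota$ as suitably cofinal using the two hypotheses in tandem. There is a ``level evaluation'' functor $e \colon \Delta C \times \Delta \downarrow X \to C \downarrow X$, $(A^\bullet, [n], u) \mapsto (A^n, u)$, satisfying $e \circ \iota = \mathrm{id}$. What remains is to produce a natural zigzag of pointwise weak equivalences between $\iota \circ e$ and the identity, at the level of the underlying diagrams of cofibrant objects in $\cm$. Condition (b) supplies the crucial ingredient: tensoring with $\Delta^1$ produces natural cylinders $A^\bullet \otimes \Delta^1 \in \Delta C$ that interpolate between $A^\bullet$ and a degenerate cosimplicial object which, after functorial Reedy resolution via (a), lies in the image of $r$. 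Such a pointwise weak equivalence of pointwise cofibrant diagrams induces a weak equivalence on homotopy colimits (by the standard homotopy invariance of $\hocolim$ for cofibrant diagrams), yielding $\iota_{\ast}$.

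The main obstacle is precisely this last step: assembling the cylinders from (b) and the replacement $R$ from (a) into a single coherent natural zigzag of pointwise weak equivalences. Intuitively, (b) provides the simplicial homotopies that collapse the $\Delta$-factor to degree $0$, while (a) functorially rectifies each cosimplicial object to one in the image of $r$; but verifying that the resulting transformations genuinely assemble into natural weak equivalences of diagrams of cofibrant objects requires delicate bookkeeping. Once this is done, the commutative triangle above together with 2-out-of-3 gives the proposition.
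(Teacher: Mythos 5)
Your reduction is set up correctly: identifying the derived counit with $q_X\colon \hocolim(\Delta C\times\Delta\downarrow X)\to X$ via \cite[4.2--4.4]{D1}, restricting to fibrant-cofibrant $X$ (though this step needs \cite[Corollary 4.3]{D1}, not mere naturality plus 2-out-of-3 --- you must know that $\hocolim(\Delta C\times\Delta\downarrow X^c)\to\hocolim(\Delta C\times\Delta\downarrow X)$ is a weak equivalence before 2-out-of-3 applies), and observing that hypothesis (a) yields a section $r\colon C\to\Delta C$, $A\mapsto R(cA)$, with $r(A)^0=A$. But the heart of the proposition is the claim that the induced map $\hocolim(C\downarrow X)\to\hocolim(\Delta C\times\Delta\downarrow X)$ is a weak equivalence, and this is exactly the step you leave unproved: you acknowledge it as ``the main obstacle'' and defer it to ``delicate bookkeeping'' without carrying it out. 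As written, the proposal is a plan, not a proof.

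Moreover, the specific plan --- exhibiting $\iota\circ e$ as connected to the identity of $\Delta C\times\Delta\downarrow X$ by a single natural zigzag of pointwise weak equivalences --- is unlikely to be realizable in that form. The paper (following Dugger) factors the comparison into two stages that use different mechanisms: first, using only (b), the equivalence $\hocolim(\Delta C\downarrow X)\simeq\hocolim(\Delta C\times\Delta\downarrow X)$ is obtained by decomposing the homotopy colimit over the levels $\Delta C^n\downarrow X$ and running the cylinder argument of \cite[Lemma 6.4]{D1} together with the realization statement \cite[Proposition 4.6]{D1}; second, using only (a), the map $\hocolim(\Delta C\downarrow X)\to\hocolim(C\downarrow X)$ induced by $A^\bullet\mapsto A^0$ is shown to be a weak equivalence as in \cite[Lemma 6.3]{D1}, via the retraction $r$ and a lifting argument producing a natural comparison with $R(cA^0)$. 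There is no single natural transformation $\iota e\Rightarrow\mathrm{id}$ (for $(A^\bullet,[n],u)$ the candidate maps $A^0\to A^n$ coming from the various $[0]\to[n]$ are neither unique nor coherently chosen), which is why the two-stage argument is needed. Keeping (a) and (b) in separate steps is not just cosmetic: the paper's Remark following the proposition exploits precisely this separation in the proof of Theorem \ref{main}(2), where only (b) is available. To complete your proof you would need to reproduce the content of \cite[Lemmas 6.3, 6.4 and Proposition 4.6]{D1} in this generality, or cite them as the paper does.
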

\begin{proof}
Let $\Delta C^n \subset \Delta C \times \Delta$ be the full subcategory of objects $(A^{\bullet}, [n])$. Clearly, $\Delta C^0 = \Delta C$. Note that there are canonical 
functors $\Delta C^n \to \cm$, $(A^{\bullet}, [n]) \mapsto A^n$. For each $X \in \cm$, there is a natural functor $\Delta C \downarrow X \to \Delta C^n 
\downarrow X$ which is given by $(A^{\bullet}, A^0 \to X) \mapsto (A^{\bullet}, A^n \to A^0 \to X)$. Using property (b), 
the proof of \cite[Lemma 6.4]{D1} shows that the canonical induced morphism
$$\hocolim(\Delta C \downarrow X) \longrightarrow \hocolim(\Delta C^n \downarrow X)$$
is a weak equivalence. Applying \cite[Proposition 4.6]{D1}, it follows that there is a canonical weak equivalence 
$$\hocolim(\Delta C \downarrow X) \simeq \hocolim(\Delta C \times \Delta \downarrow X).$$
Therefore $(F_C, G_C)$ is a homotopy reflection if and only if the canonical morphism 
$$\mathrm{hocolim}(\Delta C \downarrow X) \longrightarrow X$$
is a weak equivalence for each fibrant $X \in \cm$. This follows from the identification of the derived counit morphism (\cite[Proposition 4.2, Corollary 4.4]{D1}). Furthermore, using property (a), the proof of \cite[Lemma 6.3]{D1} shows that the canonical morphism induced by $\Delta C \to C$, 
$A^{\bullet} \mapsto A^0$, 
$$\hocolim(\Delta C \downarrow X) \longrightarrow \hocolim(C \downarrow X)$$
is a weak equivalence. The restriction to cofibrant objects does not affect the argument by \cite[Corollary 4.3]{D1}.
\end{proof}

\begin{remark} \label{explaining-argument}
It is useful to note that the assumptions (a) and (b) in Proposition \ref{auxiliary-comparison} are used separately in 
the proof for two different arguments. In particular, if only (b) is satisfied, then $(F_C, G_C)$ is a homotopy reflection 
if and only if the canonical morphism
$$\hocolim(\Delta C \downarrow X) \to X$$
is a weak equivalence for each fibrant-cofibrant $X$. Moreover, in the proof of Theorem \ref{main}(2) below, a weaker version 
of (a) will be sufficient in that context.
\end{remark}

Thus, in order to find a canonical Quillen adjunction which is a homotopy reflection, the strategy will first be to find a small full subcategory $C$ of $\cm$ 
which satisfies the assumptions of Proposition \ref{auxiliary-comparison}. When this is done and we obtain a homotopy reflection, what remains is to address the problem of turning a homotopy reflection into a Quillen equivalence. The following generalizes  \cite[Proposition 3.2]{D1}.

\begin{proposition} \label{localization}
Let $(\cm, \mathcal{C}of, \mathcal{W}, \mathcal{F}ib)$ be a model category, $\cc$ be a small category, and $$F: \SSet^{\cc^{\op}} \rightleftarrows \cm: G$$ 
a homotopy reflection. Then: 
\begin{itemize}
\item[(1)] Assuming Vop\v{e}nka's principle, there is a set of morphisms $S \subset \SSet^{\cc^{\op}}$ and an induced 
Quillen equivalence $L_S \SSet^{\cc^{\op}} \rightleftarrows \cm$. 
\item[(2)] The same conclusion holds, without the assumption of Vop\v{e}nka's principle, if in addition $(\mathcal{C}of, \cw \cap \mathcal{F}ib)$ is perfectly cofibrantly generated 
and $(\mathcal{C}of \cap \cw, \mathcal{F}ib)$ is semi-perfectly cofibrantly generated. 
\end{itemize}
\end{proposition}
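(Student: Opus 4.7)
In both cases, the plan is to exhibit a set $S$ of morphisms in $\SSet^{\cc^{\op}}$ such that the class
$$
\mathcal{S} := \{f \in \SSet^{\cc^{\op}} : \mathbb{L}F(f) \text{ is a weak equivalence in } \cm\}
$$
coincides with the class of $S$-local equivalences in $L_S \SSet^{\cc^{\op}}$. Granting this, the adjunction $(F,G)$ descends to a Quillen adjunction $F \colon L_S \SSet^{\cc^{\op}} \rightleftarrows \cm \colon G$ by Hirschhorn's criterion (since $S \subseteq \mathcal{S}$), and this adjunction is a Quillen equivalence: the derived counit is a weak equivalence by the homotopy reflection hypothesis; and the derived unit $\eta_X \colon X \to \mathbb{R}G\,\mathbb{L}F(X)$ lies in $\mathcal{S}$ because, by the triangle identity, $\mathbb{L}F(\eta_X)$ is a section of the derived counit at $\mathbb{L}F(X)$ and hence a weak equivalence.

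For part (2), I would follow Dugger's argument \cite{D1} under the weaker hypotheses available here. The perfect hypothesis on $(\mathcal{C}of, \cw \cap \mathcal{F}ib)$ and the semi-perfect hypothesis on $(\mathcal{C}of \cap \cw, \mathcal{F}ib)$ permit the use of Corollary \ref{swfs-factorizations}(2) together with the analogous factorization result for the trivial-cofibration/fibration pair; combined with Proposition \ref{uniformization2}, they produce arbitrarily large regular cardinals $\lambda$ for which $F$ preserves $\lambda$-presentability, $G$ preserves $\lambda$-filtered colimits, and both functorial factorizations in $\cm$ send morphisms between $\lambda$-presentable cofibrant objects to factorizations with $\lambda$-presentable intermediate object. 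Set $S$ to be the (small) collection of morphisms $A \to B$ in $\SSet^{\cc^{\op}}$ between $\lambda$-presentable cofibrant objects which lie in $\mathcal{S}$. Writing an arbitrary morphism of $\mathcal{S}$ between cofibrant objects as a $\lambda$-filtered colimit of morphisms in $S$, and invoking the above preservation properties, one then verifies that the $S$-local equivalences coincide with $\mathcal{S}$.

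For part (1), no cofibrant generation hypothesis on $\cm$ is available, so the set $S$ must be produced purely via Vopěnka's principle. The essential image of the derived right adjoint $\mathbb{R}G \colon \Ho(\cm) \to \Ho(\SSet^{\cc^{\op}})$ is a reflective full subcategory of $\Ho(\SSet^{\cc^{\op}})$ whose associated class of inverted morphisms is precisely $\mathcal{S}$. Under Vopěnka's principle, every such reflective subcategory of the homotopy category of a combinatorial model category arises as the localization at a set of maps: this is a homotopy-theoretic enhancement of the Adámek--Rosický small-orthogonality theorem \cite{AR} for locally presentable categories, whose proof runs parallel to that of Proposition \ref{uniformization} and crucially exploits the stationarity of the class of regular cardinals under Vopěnka's principle. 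This delivers the required set $S$.

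The principal difficulty is part (1). In the absence of any small-object machinery on $\cm$, the a priori proper class $\mathcal{S}$ must be controlled using only the combinatorial structure of $\SSet^{\cc^{\op}}$ together with the set-theoretic input of Vopěnka's principle. Part (2), by contrast, is a relatively direct transcription of Dugger's combinatorial argument once the presentability-preservation results of Section 3 are brought to bear.
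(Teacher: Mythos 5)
Your part (1) is in substance the paper's argument: the paper localizes $\SSet^{\cc^{\op}}$ at the \emph{class} of derived unit maps $X^c \to G(F(X^c)^f)$ and invokes the Vop\v{e}nka-based localization theorem of Rosick\'y--Tholen \cite{RT} (see also \cite{CC}) to get existence of $L_S$ together with a subset $S'\subset S$ inducing the same localization; your reformulation via the reflective subcategory $\mathbb{R}G(\Ho \cm)\subseteq \Ho(\SSet^{\cc^{\op}})$ is the same mechanism. (Your closing assessment is inverted relative to the paper, though: in the paper part (1) is the short appeal to \cite{RT}, and part (2) is where the cardinal bookkeeping happens.)

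Part (2) has a genuine gap. You set $S$ to be the morphisms of $\mathcal{S}$ between $\lambda$-presentable cofibrant objects and propose to verify that the $S$-local equivalences coincide with $\mathcal{S}$ by ``writing an arbitrary morphism of $\mathcal{S}$ \ldots as a $\lambda$-filtered colimit of morphisms in $S$.'' The canonical way to write a morphism of $\SSet^{\cc^{\op}}$ as a $\lambda$-filtered colimit of morphisms between $\lambda$-presentables gives no control over whether those component morphisms lie in $\mathcal{S}$; a filtered colimit of maps can land in $\mathcal{S}$ without any of the components doing so. Making your decomposition work would amount to proving that $\mathcal{S}$ is an accessible, accessibly embedded subcategory of the arrow category (a Smith-type accessibility statement for the preimage of $\cw$ under $\mathbb{L}F$), which is not available here since $\cm$ is not assumed combinatorial; the hypotheses only give closure of $\cw$ under $\lambda$-filtered colimits (Remark \ref{weak-eq2}), not accessibility. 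The paper avoids this entirely: it takes $S$ to be the set of unit maps $X^c\to G(F(X^c)^f)$ for $X$ $\lambda$-presentable, chooses $\lambda$ (using the perfect/semi-perfect hypotheses and Corollary \ref{swfs-special case}(2)) so that the composite $G\circ(-)^f\circ F\circ(-)^c$ preserves $\lambda$-filtered colimits, and then \emph{naturality of the unit} exhibits every unit map as a $\lambda$-filtered colimit of maps already in $S$; closure of weak equivalences in $\SSet^{\cc^{\op}}$ under filtered colimits then shows every unit map is an $S$-local equivalence. That weaker statement --- all derived unit maps are $S$-local equivalences --- already suffices for the Quillen equivalence, so the identity $\mathcal{S}=\{S\text{-local equivalences}\}$ that your plan insists on is both harder than necessary and the source of the gap.
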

\begin{proof}
For (1), let $S$ be the class of canonical morphisms 
$$
X^c\to G\bigl( F(X^{c})^{f}\bigr)
$$ 
where $X$ is in $\SSet^{\cc^{\op}}$ and $(-)^{c}$ and $(-)^f$ denote choices of cofibrant and fibrant replacement functors. By \cite[Theorem 2.3]{RT} (see also \cite{CC}), the left Bousfield localization of 
$\SSet^{\cc^{\op}}$ at $S$ exists under the assumption of Vop\v{e}nka´s principle. Moreover, there is a set $S'\subset S$ 
such that we have $L_S \SSet^{\cc^{\op}} = L_{S'} \SSet^{\cc^{\op}}$. Then it is easy to conclude that the induced Quillen adjunction 
$L_S \SSet^{\cc^{\op}} \rightleftarrows \cm$ is a Quillen equivalence. 

For (2),  note first that there is a regular cardinal $\lambda$ such that the following are satisfied: 
\begin{enumerate}
 \item[(i)] there is a cofibrant replacement functor $(-)^c: \SSet^{\cc^{\op}} \to \SSet^{\cc^{\op}}$ which preserves $\lambda$-filtered colimits. 
 \item[(ii)] there is a fibrant replacement functor $(-)^f: \cm \to \cm$ which preserves $\lambda$-filtered colimits.
 \item[(iii)] $(F\circ y)(c, [n]) \in \cm_c$ is $\lambda$-presentable in $\cm$ for each $(c, [n]) \in \cc \times \Delta$.
\end{enumerate}
Indeed, conditions (i)-(ii) are satisfied by the factorizations given by the small object argument, and condition (iii) follows from 
Corollary \ref{swfs-special case}(2). Let $S$ be the set of morphisms 
$$
X^c\to G(F(X^c)^{f})
$$ 
where $X$ is $\lambda$-presentable in $\SSet^{\cc^{\op}}$. The left Bousfield localization $L_S \SSet^{\cc^{\op}}$ exists and there is an induced Quillen adjunction 
$\overline{F}: L_S \SSet^{\cc^{\op}} \rightleftarrows \cm: \overline{G}$ - this is again a homotopy reflection. We claim that the composite 
$$\SSet^{\cc^{\op}} \stackrel{(-)^c}{\longrightarrow} \SSet^{\cc^{\op}} \stackrel{F}{\longrightarrow} \cm \stackrel{(-)^f}{\longrightarrow} \cm \stackrel{G}{\longrightarrow} \SSet^{\cc^{\op}}$$
preserves $\lambda$-filtered colimits. Indeed, this follows from the assumptions since each of the functors preserves $\lambda$-filtered colimits. Therefore, if we write an arbitrary object
$X \in \SSet^{\cc^{\op}}$ as a $\lambda$-filtered colimit of $\lambda$-presentable objects, the canonical morphism $X^c \to G(F(X^c)^f)$ is a $\lambda$-filtered colimit of morphisms in $S$. Since weak equivalences in 
$\SSet^{\cc^{\op}}$ are closed under filtered colimits, it follows that $X^c \to G(F(X^c)^f)$ is an $S$-local equivalence for each $X$ and therefore $(\overline{F}, \overline{G})$ 
is a Quillen equivalence. 
\end{proof} 

\subsection{The main results} \label{results}

\medskip

The following theorem is our main result on the existence of small presentations for model categories. The first part assumes Vop\v{e}nka's principle and establishes a slightly weaker 
version of the claim in \cite{Ra}. The second part is independent of Vop\v{e}nka's principle and generalizes the main result of \cite{D1}.

\begin{theorem}\label{main}
Let $(\cm, \mathcal{C}of, \mathcal{W}, \mathcal{F}ib)$ be a model category such that $(\mathcal{C}of, \mathcal{W} \cap \mathcal{F}ib)$ is strictly cofibrantly generated.
\begin{itemize}
\item[(1)]  Assume that Vop\v enka's principle holds. Then $\cm$ admits a small presentation. 
\item[(2)] Suppose that $(\mathcal{C}of, \mathcal{W} \cap \mathcal{F}ib)$ is perfectly cofibrantly generated and $(\mathcal{C}of \cap \cw, \mathcal{F}ib)$ is semi-perfectly cofibrantly 
generated. Then $\cm$ admits a small presentation.
\end{itemize}
\end{theorem}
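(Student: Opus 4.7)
The plan is to follow the strategy outlined in Section \ref{method}, attributed to Dugger: namely, to construct a small full subcategory $C \subseteq \cm_c$ for which the canonical Quillen adjunction
$$(F_C, G_C) \colon \SSet^{\Delta C^{\op}} \rightleftarrows \cm$$
is a homotopy reflection, and then to apply Proposition \ref{localization}(1) under Vop\v{e}nka's principle in case (1), and Proposition \ref{localization}(2) under the perfect/semi-perfect hypotheses in case (2), in order to produce a set of morphisms $S$ with $L_S \SSet^{\Delta C^{\op}} \rightleftarrows \cm$ a Quillen equivalence. This will provide the required small presentation.

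To pick $C$, I would first fix a strict generating set $\cx$ for $(\mathcal{C}of, \cw \cap \mathcal{F}ib)$ (as well as a perfect one in case (2), together with a semi-perfect generator for $(\mathcal{C}of \cap \cw, \mathcal{F}ib)$). Then I would choose a regular cardinal $\lambda$ large enough so that, simultaneously: (i) $C := \ca(\cx, \lambda)$ is small and colimit-dense in $\cm_c$, and its objects are $\lambda$-presentable in $\cm_c$ (Theorem \ref{swfs-general case}(3) in case (1); Theorem \ref{swfs-general case}(2) in case (2)); (ii) every morphism between objects of $C$ admits a functorial $(\mathcal{C}of, \cw \cap \mathcal{F}ib)$-factorization whose middle term again lies in $C$ (Corollary \ref{swfs-factorizations}); and (iii) any $\lambda$-filtered colimit in $\cm_c^{\to}$ of weak equivalences whose colimit is in $\cm_c^{\to}$ is itself a weak equivalence (Proposition \ref{weak-eq} in case (1), Remark \ref{weak-eq2} in case (2)). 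The existence of a single such $\lambda$ follows from Remark \ref{same-cardinal} together with Proposition \ref{uniformization} or Proposition \ref{uniformization2} as appropriate.

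With this choice of $C$, condition (b) of Proposition \ref{auxiliary-comparison}, namely that $A^{\bullet} \otimes \Delta^1 \in \Delta C$ whenever $A^{\bullet} \in \Delta C$, holds because $A^{\bullet} \otimes \Delta^1$ is built level-wise from finite colimits of the $A^n$'s and $C$ is closed under $\lambda$-small colimits in $\cm$ by construction of $\ca(\cx, \lambda)$. For condition (a) I would build the Reedy cofibrant replacement $R$ on $cC$ by induction on cosimplicial degree: set $R(X^{\bullet})^0 = X^0$, and at stage $n$ factor the composite $L_n R(X^{\bullet}) \to X^n$ using (ii), obtaining the cofibration $L_n R(X^{\bullet}) \to R(X^{\bullet})^n$ with middle term in $C$. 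Finally, the hypothesis that $\hocolim(C \downarrow X) \to X$ is a weak equivalence for fibrant-cofibrant $X$ would follow from Theorem \ref{swfs-general case}(1): such an $X$ is a $\lambda$-filtered colimit of objects from $C$, the overcategory $C \downarrow X$ is $\lambda$-filtered and computes this colimit (since $C$ is closed under $\lambda$-small colimits), and Corollary \ref{homotopy-colim} (respectively Remark \ref{weak-eq2} in case (2)) identifies the colimit with the homotopy colimit.

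The main obstacle, I expect, will be executing the induction in condition (a) while keeping every intermediate latching object and replacement stage strictly inside $C$; this is precisely where a uniform choice of $\lambda$ becomes essential, and in case (1) is ultimately what forces the appeal to Vop\v{e}nka's principle via Proposition \ref{uniformization} and Theorem \ref{swfs-general case}(3). Once a $\lambda$ making all the required conditions hold simultaneously has been produced, the construction of $C$ and the verification of the hypotheses of Proposition \ref{auxiliary-comparison} proceed mechanically, and the application of Proposition \ref{localization} completes the proof.
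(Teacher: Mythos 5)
Your overall strategy is the one the paper follows -- choose a suitable $\lambda$, form a small subcategory $C$ of cofibrant $\lambda$-presentable objects, verify the hypotheses of Proposition \ref{auxiliary-comparison} via the colimit-density and filtered-colimit statements, and finish with Proposition \ref{localization} -- but there are two places where your execution has a genuine gap.

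First, a smaller point: you take $C:=\ca(\cx,\lambda)$ and then assert in (ii) that the functorial $(\mathcal{C}of,\cw\cap\mathcal{F}ib)$-factorization of a morphism between objects of $C$ has middle term again in $C$. Corollary \ref{swfs-factorizations} only guarantees that the middle term is $\lambda$-presentable (in $\cm_c$, resp.\ in $\cm$); it gives no control placing it back inside $\ca(\cx,\lambda)$. The paper avoids this by taking $C$ to be the full subcategory of \emph{all} $\lambda$-presentable objects of $\cm_c$ (resp.\ all cofibrant objects $\lambda$-presentable in $\cm$), with $\ca(\cx,\lambda)$ serving only as the colimit-dense witness inside $C$. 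This is an easy repair, but as stated your (ii) does not follow from the cited results.

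Second, and more seriously: your degree-by-degree construction of the Reedy cofibrant replacement $R$ does not establish hypothesis (a) of Proposition \ref{auxiliary-comparison}, and in case (2) the authors explicitly state that they do \emph{not} know whether (a) holds and use a different argument. Two problems arise. Factoring only the latching map $L_nR(X^{\bullet})\to X^n$ does not produce a cosimplicial object mapping to $X^{\bullet}$; the standard inductive construction must factor $L_nR(X^{\bullet})\to M_nR(X^{\bullet})\times_{M_nX^{\bullet}}X^n$, and the matching objects are finite \emph{limits}, whose presentability and cofibrancy cannot be controlled in a category that is not locally presentable. In case (1) the paper sidesteps this entirely: it takes the standard Reedy replacement $R\colon c\cm_c\to c\cm_c$ as given and invokes Vop\v{e}nka's principle through Proposition \ref{uniformization} (applied to the evaluations $\mathrm{ev}_nR$ between preaccessible categories) to find a $\lambda$ for which $R$ preserves pointwise $\lambda$-presentability -- no bookkeeping through the construction is needed. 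In case (2) no such uniformization is available, so the paper replaces hypothesis (a) by the weaker input of Remark \ref{explaining-argument}: it builds a Reedy cofibrant replacement only of \emph{constant} cosimplicial objects, using the perfectly cofibrantly generated weak factorization system of the Reedy model structure on $c\cm$, and then shows that the resulting functor $\rho\colon C\downarrow X\to\Delta C\downarrow X$ is homotopy cofinal, so that $\hocolim(\Delta C\downarrow X)\to X$ is still a weak equivalence. Your proposal, as written, has no argument covering this step in case (2).
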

\begin{proof}
We first prove (1). By Corollary \ref{swfs-special case}(3) and Propositions \ref{uniformization} and \ref{weak-eq}, 
it follows that there is a regular cardinal $\lambda$ such that 
\begin{itemize} 
\item[(i)] $\cm_c$ contains a small full subcategory $\ca = \ca(\cx, \lambda)$ which consists of $\lambda$-presentable objects in $\cm_c$ and each cofibrant object is a $\lambda$-filtered colimit in $\cm$ of objects in $\ca$.
\item[(ii)] there is a Reedy cofibrant replacement functor $R \colon c \cm_c \to c \cm_c$ which preserves cosimplicial objects which are pointwise $\lambda$-presentable in $\cm_c$. Moreover, we may assume that $R(X^{\bullet})^0 \to X^0$ is the identity morphism for each $X^{\bullet} 
\in c \cm_c$ (since $X^0$ is already cofibrant in $\cm$). 
\item[(iii)] the weak equivalences between cofibrant objects are closed under $\lambda$-filtered colimits in $\cm$ which land in $\cm_c$.
\end{itemize}
(i) follows from Corollary \ref{swfs-special case}. Since $\cm_c$ is preaccessible, and therefore has a small dense subcategory, it embeds fully in the category of graphs $\Gra$. It follows that $c \cm_c$ embeds fully in a locally 
presentable category. As a consequence, assuming Vop\v{e}nka's principle, $c \cm_c$ is again preaccessible by \cite[Theorem 6.6]{AR} and \cite[Theorem 1]{AR1}. Let $R: c \cm_c \to c \cm_c$ be the standard Reedy cofibrant replacement functor which is 
defined inductively on the cosimplicial degree (leaving degree $0$ unchanged). Then (ii) follows from Proposition \ref{uniformization} applied to the small collection of functors 
$$\{ \mathrm{ev}_n R \colon c \cm_c \stackrel{R}{\to} c \cm_c \stackrel{\mathrm{ev}_n}{\to} \cm_c\}_{n \geq 0}.$$  Moreover, it is possible to find a common $\lambda$ satisfying simultaneously (i) and (ii) (cf. Remark \ref{same-cardinal}). (iii) follows from Proposition \ref{weak-eq}. 

Let $C$ be the small full subcategory of $\cm_c$ consisting of $\lambda$-presentable objects. We recall that $\Delta C$ denotes the small full subcategory of $c \cm$ which consists of cosimplicial 
resolutions $A^{\bullet} \in c\cm$ such that $A^n \in C$ for each $n$. As explained earlier, the functor $\Delta C \times \Delta \to \cm$, $(A^{\bullet}, [n]) \mapsto A^n$, extends to a 
left Quillen functor $F_C: \SSet^{\Delta C^{\op}} \to \cm$. We claim that this defines a homotopy reflection. Assumption (a) of Proposition \ref{auxiliary-comparison} is satisfied by (ii) above. Assumption (b) of Proposition \ref{auxiliary-comparison} is satisfied since $(A^{\bullet} \otimes \Delta^1)^n$ is the coend (in $\cm$) $A^{\bullet} \otimes_{\Delta} (\Delta^1 \times \Delta^n) \in \cm_c$, and therefore the presentability rank is preserved (this is also a coend in $\cm_c$). 

Therefore, by Proposition \ref{auxiliary-comparison}, it suffices to show that for each fibrant-cofibrant object $X \in \cm$, the canonical morphism
$$\mathrm{hocolim}(C \downarrow X) \longrightarrow X$$
is a weak equivalence. We have canonical morphisms
$$
\xymatrix{
\hocolim(C \downarrow X) \ar[dr] \ar[rr]^{\sim}  && \colim(C \downarrow X) \ar[dl]^{\cong}  \\
& X &
}
$$ 
The object $X$ can be written as a $\lambda$-filtered colimit of $G \colon I \to \cm$ with values in $\ca \subseteq C$ - this is also a colimit in $\cm_c$. It follows that the functor 
$$I \to C \downarrow X, \ \ i \mapsto (G(i) \to X)$$ 
is cofinal. Hence $C \downarrow X$ is also $\lambda$-filtered and the right morphism in the diagram above is an isomorphism by cofinality. Moreover, the horizontal morphism is a weak 
equivalence by (iii) above and Corollary \ref{homotopy-colim}. 
It follows that the left morphism is a weak equivalence for each fibrant-cofibrant $X$, and therefore $F$ defines a homotopy reflection as claimed. 
The result then follows by applying Proposition \ref{localization}(1).

For (2), the argument is analogous - but a little different. Again, there is a regular cardinal $\lambda$ such that 
\begin{itemize} 
\item[(i)] $\cm_c$ contains a small full subcategory $\ca = \ca(\cx, \lambda)$ which consists of $\lambda$-presentable objects in $\cm$ and each cofibrant object is a $\lambda$-filtered 
colimit in $\cm$ of objects in $\ca$.
\item[(ii)] there is a Reedy cofibrant replacement functor for constant cosimplicial objects at cofibrant objects, $R \colon \cm_c \to c \cm_c$, where $R(X) \to cX$ is a trivial Reedy fibration and $R$ sends $\lambda$-presentable cofibrant objects to cosimplicial objects which are pointwise $\lambda$-presentable in $\cm$. 
\item[(iii)] the weak equivalences are closed under $\lambda$-filtered colimits in $\cm$.
\end{itemize}
(i) and (iii) are guaranteed by Corollary \ref{swfs-special case}(2) and Remark \ref{weak-eq2} respectively. To see (ii), we need to consider the model category $c \cm$ with the Reedy model structure. The weak factorization system $(\mathcal{C}of_{c \cm}, \cw_{c\cm} \cap \mathcal{F}ib_{c \cm})$ of this Reedy model category $c \cm$ is again perfectly cofibrantly generated (see \cite[Proposition 15.6.24]{Hi}). Even though (ii) does not follow directly from Corollary 
\ref{swfs-factorizations}(2), the same idea of proof can be applied to this weak factorization system. Indeed, the functorial 
cofibrant replacement from the small object argument yields a functor $\widetilde{R}: \cm \to c \cm$ which restricts to a Reedy cofibrant replacement functor $R \colon \cm_c \to c \cm_c$ with the desired properties. 
The existence of the required $\lambda$ follows from Proposition \ref{uniformization2} using Corollary \ref{swfs-special case}(2). 

Let $C$ be the small full subcategory of $\cm_c$ which consists of the cofibrant objects which are $\lambda$-presentable objects in $\cm$. Every object $X \in C$ is a $\lambda$-filtered colimit of objects in $\ca$ and therefore also a split subobject of an object in $\ca$. We consider the canonical Quillen adjunction $F_C \colon \SSet^{\Delta C^{\op}} \rightleftarrows \cm \colon G_C$. Using (i), (iii) and similar arguments as before, it follows that the category $C \downarrow X$ is $\lambda$-filtered for every fibrant-cofibrant $X$ and the canonical morphisms 
$$\hocolim(C \downarrow X) \xrightarrow{\sim} \mathrm{colimit}(C \downarrow X) \xrightarrow{\cong} X$$
are weak equivalences. Assumption (b) of Proposition \ref{auxiliary-comparison} is satisfied, but we don't know about Assumption (a), so we will need a different argument to complete the proof. According to the proof of Proposition \ref{auxiliary-comparison} and Remark \ref{explaining-argument}, 
the canonical Quillen adjunction $(F_C, G_C)$ is a homotopy reflection if the canonical morphism 
$$\hocolim(\Delta C \downarrow X) \longrightarrow X$$
is a weak equivalence for every fibrant-cofibrant $X$. The functor $R$ from (ii) above defines a functor 
$$\rho \colon C \downarrow X \longrightarrow \Delta C \downarrow X$$
$$(E, E \to X) \mapsto (R(E), R(E)^0 \to E \to X).$$
There is also a functor in the other direction given by $(E^{\bullet}, E^0 \to X) \mapsto (E^0, E^0 \to X)$. For each $(E^{\bullet}, E^0 \to X) \in \Delta C \downarrow X$, we can find a lift in the following diagram  
\[
\xymatrix{
0 \ar[r] \ar[d] & R(E^0) \ar@{->>}[d]^{\sim} \\
E^{\bullet} \ar[r] \ar@{-->}[ru] & c E^0
}
\]
Since $C \downarrow X$ is filtered, it follows that the functor $\rho$ is (homotopy) cofinal. Therefore $\Delta C \downarrow X$ is also $\lambda$-filtered and, using (iii), there are canonical weak equivalences as follows
\[
\xymatrix{
\hocolim( \Delta C \downarrow X) \ar[d]^{\sim} \ar@<1ex>[r]_{\sim} & \hocolim(C \downarrow X) \ar[d]^{\sim} \ar@<1ex>[l] \\
\mathrm{colim}(\Delta C \downarrow X) \ar@<1ex>[r]_{\cong} & \mathrm{colim}(C \downarrow X) \ar@<1ex>[l].
}
\]
As a consequence, the Quillen adjunction $(F_C, G_C)$ is a homotopy reflection. The result then follows from Proposition \ref{localization}(2).
\end{proof}

Specializing to cofibrantly generated model categories, we obtain the following as immediate corollary. 

\begin{corollary} \label{main2}
Let $(\cm, \mathcal{C}of, \mathcal{W}, \mathcal{F}ib)$ be a cofibrantly generated model category. 
\begin{itemize}
\item[(1)] Suppose that $(\mathcal{C}of, \mathcal{W} \cap \mathcal{F}ib)$ is strictly cofibrantly generated. Assuming 
Vop\v{e}nka's principle, $\cm$ admits a small presentation. 
\item[(2)] Suppose that $(\mathcal{C}of, \mathcal{W} \cap \mathcal{F}ib)$ is $\aleph_0$-cofibrantly generated. Assuming 
Vop\v{e}nka's principle, $\cm$ admits a small presentation. 
\item[(3)] Suppose that $(\mathcal{C}of, \mathcal{W} \cap \mathcal{F}ib)$ is perfectly cofibrantly generated and 
$(\mathcal{C}of \cap \cw, \mathcal{F}ib)$ is semi-perfectly cofibrantly generated. Then $\cm$ admits a small presentation. 
\end{itemize}
In particular, in each of the cases (1)-(3), the model category $\cm$ is Quillen equivalent to a combinatorial model category. 
\end{corollary}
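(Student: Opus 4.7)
The plan is to derive each part directly from Theorem \ref{main}, using Proposition \ref{finitely small case} as a bridge in case (2). First, I would note that part (1) is essentially a restatement of Theorem \ref{main}(1): the hypothesis on $(\mathcal{C}of, \cw \cap \mathcal{F}ib)$ is identical, and the additional assumption that $(\mathcal{C}of \cap \cw, \mathcal{F}ib)$ admits a generator is not used in the proof of Theorem \ref{main}(1). So the conclusion of Theorem \ref{main}(1) applies unchanged under Vop\v{e}nka's principle.

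For part (2), the key observation is that the $\aleph_0$-cofibrant generation hypothesis on $(\mathcal{C}of, \cw \cap \mathcal{F}ib)$ upgrades automatically to \emph{strict} $\aleph_0$-cofibrant generation by Proposition \ref{finitely small case}. Once this upgrade is in hand, part (1) (equivalently, Theorem \ref{main}(1)) applies and gives a small presentation. Thus (2) reduces to (1) via Proposition \ref{finitely small case}, with Vop\v{e}nka's principle still invoked only through the application of Theorem \ref{main}(1).

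Part (3) is an immediate reformulation of Theorem \ref{main}(2), whose hypotheses match those of Corollary \ref{main2}(3) verbatim; no set-theoretic assumption beyond ZFC is required here.

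Finally, for the concluding assertion that $\cm$ is Quillen equivalent to a combinatorial model category in each case, I would recall the definition of a small presentation: it consists of a Quillen equivalence
\[
F \colon L_S \SSet^{\cc^{\op}} \rightleftarrows \cm \colon G
\]
for some small category $\cc$ and set of morphisms $S$ in $\SSet^{\cc^{\op}}$. As noted at the beginning of Section \ref{section4}, the left Bousfield localization $L_S \SSet^{\cc^{\op}}$ at a set of morphisms is a (left proper, simplicial) combinatorial model category, so this exhibits $\cm$ as Quillen equivalent to a combinatorial model category. I do not anticipate any real obstacle: the whole corollary amounts to bookkeeping on top of Theorem \ref{main} and Proposition \ref{finitely small case}.
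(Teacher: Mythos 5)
Your proposal is correct and matches the paper's intent exactly: the paper gives no explicit proof, presenting the corollary as an immediate specialization of Theorem \ref{main}, with (1) and (3) being direct restatements of Theorem \ref{main}(1) and (2) respectively, and (2) reducing to (1) via Proposition \ref{finitely small case}. Your handling of the final assertion via the combinatoriality of $L_S \SSet^{\cc^{\op}}$ is also the intended reading.
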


\begin{remark}
We do not know if the conclusion in Corollary \ref{main2} is true for all cofibrantly generated model categories. 

\end{remark}

\subsection{An Example}\label{counterexample}

\medskip

Theorem \ref{main} is false for model categories whose weak factorization systems are only weakly cofibrantly generated. This was shown in \cite{R2} assuming the 
negation of Vop\v{e}nka's principle, but the claim holds more generally under the standard set-theoretical assumptions. 

Let $\ck$ be the category of compact Hausdorff spaces with the trivial model structure where the weak equivalences are the isomorphisms. By Example \ref{compact-hausdorff}, 
this model structure has weakly cofibrantly generated weak factorization systems. It follows from the results in Subsections \ref{method} and \ref{results} that every combinatorial model category 
has a small homotopically dense subcategory. Since $\ck$ does not have a small (homotopically) dense subcategory, it is easy to conclude that it is not Quillen equivalent to a 
combinatorial model category.

Moreover, we claim that $\ck$ is not equivalent to the homotopy category of a combinatorial model category. To see this, we begin with the following 
property of these homotopy categories which will serve as a basic criterion (cf. \cite[Proposition 6.10]{R1}).

\begin{proposition} \label{criterion}
Let $\cm$ be a combinatorial model category and $\gamma: \cm \to \Ho(\cm)$ the localization functor. Then there is a regular cardinal $\lambda$ such that for 
each $\lambda$-presentable object $X \in \cm$, the object $\gamma(X)$ is $\lambda$-presentable with respect to coproducts in $\Ho(\cm)$, i.e. any morphism $\gamma(X) \to \coprod_{i\in I} Y_i$ in $\Ho(\cm)$ factors through a 
subobject $\coprod_{i \in J} Y_i$ with $|J|<\lambda$.
\end{proposition}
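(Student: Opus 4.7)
The plan is to exploit the fact that in a combinatorial model category, cofibrant and fibrant replacements and coproducts all interact well with sufficiently filtered colimits, so that a morphism in $\Ho(\cm)$ from a presentable object into a coproduct can be represented on the point-set level as a map from a presentable object into a $\lambda$-filtered colimit of fibrant replacements of sub-coproducts, which must then factor.

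First, I would choose a regular cardinal $\lambda$ large enough that all of the following hold simultaneously: (i) $\cm$ is locally $\lambda$-presentable; (ii) both pairs of generating (trivial) cofibrations can be chosen with $\lambda$-presentable domains and codomains; (iii) the functorial cofibrant replacement $(-)^c : \cm \to \cm$ and the functorial fibrant replacement $(-)^f : \cm \to \cm$ obtained from the small object argument preserve $\lambda$-filtered colimits and preserve $\lambda$-presentable objects. Item (iii) is standard for combinatorial model categories and can also be obtained by applying Proposition \ref{uniformization2} to $(-)^c$ and $(-)^f$ viewed as functors $\cm \to \cm$ together with all the evaluation-at-stage functors of the small object argument.

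Now let $X$ be $\lambda$-presentable in $\cm$ and let $f \colon \gamma(X) \to \coprod_{i \in I} Y_i$ be a morphism in $\Ho(\cm)$. Replace $X$ by $X^c$ (still $\lambda$-presentable and cofibrant) and each $Y_i$ by $Y_i^c$; then the coproduct $\coprod_{i \in I} Y_i^c$ taken in $\cm$ models the coproduct in $\Ho(\cm)$, since coproducts of cofibrant objects are homotopy coproducts. Represent $f$ by an actual morphism $\widetilde f \colon X^c \to \bigl(\coprod_{i \in I} Y_i^c\bigr)^f$ in $\cm$, well defined up to homotopy. Order the set $[I]^{<\lambda}$ of $\lambda$-small subsets of $I$ by inclusion; this is a $\lambda$-directed poset, and there is a canonical isomorphism
\[
\colim_{J \in [I]^{<\lambda}} \,\coprod_{i \in J} Y_i^c \;\cong\; \coprod_{i \in I} Y_i^c.
\]
Applying $(-)^f$ and using (iii), one obtains
\[
\colim_{J \in [I]^{<\lambda}} \Bigl(\coprod_{i \in J} Y_i^c\Bigr)^{f} \;\cong\; \Bigl(\coprod_{i \in I} Y_i^c\Bigr)^f.
\]
Since $X^c$ is $\lambda$-presentable, the map $\widetilde f$ factors through some $\bigl(\coprod_{i \in J} Y_i^c\bigr)^f$ with $|J| < \lambda$. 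Passing back to $\Ho(\cm)$, this exhibits $f$ as factoring through $\coprod_{i \in J} Y_i$, as required.

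The main technical obstacle is establishing (iii): one must verify that for $\lambda$ sufficiently large the canonical small object argument factorizations, which a priori are $\lambda$-accessible functors $\cm \to \cm^{\to}$, really do preserve $\lambda$-presentable objects, and also that they commute with $\lambda$-filtered colimits of cofibrant/fibrant objects in the required way. This is where Proposition \ref{uniformization2} (or directly the Uniformization Theorem of \cite{AR}) is invoked. The remainder — rewriting a coproduct as a $\lambda$-filtered colimit of its $\lambda$-small sub-coproducts and representing $\Ho(\cm)$-morphisms via cofibrant-to-fibrant maps — is routine.
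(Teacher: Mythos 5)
Your proposal is correct and follows essentially the same route as the paper's proof: choose $\lambda$ so that the small-object-argument replacement functors preserve $\lambda$-filtered colimits and $\lambda$-presentable objects (via the Uniformization Theorem), model the homotopy coproduct by $\coprod Y_i^c$ in $\cm$, represent the map by a point-set morphism into $(\coprod Y_i^c)^f$, write this as a $\lambda$-filtered colimit over $\lambda$-small sub-coproducts, and use $\lambda$-presentability of the (cofibrant replacement of the) source to factor. The only cosmetic difference is that the paper uses $X^{cf}$ rather than $X^c$ as the domain of the representing morphism, which changes nothing.
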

\begin{proof}
Let $\lambda$ be a regular cardinal such that the fibrant and cofibrant replacement functors in $\cm$ given by the small object argument preserve $\lambda$-filtered colimits and 
$\lambda$-presentable
objects. We denote these functors by $(-)^c$, $(-)^f$, and $(-)^{cf} : = (-)^f \circ (-)^c$. The existence of such $\lambda$ follows from \cite[Theorem 2.19]{AR}. 

Let $X$ be a $\lambda$-presentable object in $\cm$ and $f: \gamma(X) \to Y = \coprod^{\Ho(\cm)}_{i \in I} Y_i$ a morphism in $\Ho(\cm)$. Coproducts of cofibrant objects define coproducts 
in the homotopy category, hence $Y$ is canonically isomorphic to $\gamma(\coprod^{\cm}_{i \in I} Y_i^c)$. The associated morphism $f': \gamma(X) \to \gamma(\coprod^{\cm}_{i \in I} Y_i^c)$ 
can be represented, up to canonical isomorphism in $\Ho(\cm)$, by a morphism $\tilde{f} \colon X^{cf} \to (\coprod^{\cm}_{i \in I} Y_i^c)^f$. Since $(-)^f$ preserves $\lambda$-filtered 
colimits, the target of $\tilde{f}$ can be written as a $\lambda$-filtered colimit of $(\coprod^{\cm}_{j \in J} Y_j^c)^f$ for all $J \subseteq I$ with $|J| < \lambda$. Then since $X^{cf}$ is again 
$\lambda$-presentable in $\cm$, the morphism $\tilde{f}$ factors through $(\coprod^{\cm}_{j \in J} Y_j^c)^f$ for some $J \subseteq I$ with $|J| < \lambda$. This factorization induces the required 
factorization in the homotopy category. 
\end{proof}

Suppose that $\Ho(\ck)\cong\ck$ is equivalent to $\Ho(\cm)$ where $\cm$ is a combinatorial model category. Let $\gamma:\cm\to\Ho(\cm)$ be the localization functor to the 
homotopy category $\Ho(\cm)$. Since the property in Proposition \ref{criterion} is preserved by equivalences of categories, $\ck$ must contain many objects which are $\lambda$-presentable with respect to coproducts.
But $\emptyset$ is actually the only such object in $\ck$, hence the contradiction. Indeed, there are maps $*=\mathbb{F}(*) \to \mathbb{F}(I)$  which do not factor through any 
$\mathbb{F}(J)$ with $|J|<\lambda$. To see this, first recall that the inclusion $\mathbb{F}(J) \to \mathbb{F}(I)$ sends an ultrafilter $\cu$ on $J$ to the ultrafilter consisting of all subsets $X \subseteq I$ 
whose intersection $X\cap J$ belongs to $\cu$. Then take an ultrafilter on $I$ containing the filter of subsets whose complement has cardinality $<\lambda$. This defines a point in 
$\mathbb{F}(I)$ which does not factor through any $\mathbb{F}(J)$ with $|J|<\lambda$.

\section{Erratum}

Let $\cm$ be a cocomplete category and $S$ a set of objects. We denote $S$ also the full subcategory of $\cm$ spanned by $S$. An object $X \in \cm$ is called $S$-generated (see 
\cite{Ra}) if the canonical morphism from the canonical diagram of $X$ with respect to $S$, 
$$\kappa_S(X): = \mathrm{colim}(S \downarrow X \to \cm) \to X,$$
is an isomorphism. Let $\cm_S$ denote the full subcategory of $\cm$ spanned by the $S$-generated objects. It was claimed in \cite[Proposition 3.1(i)]{Ra} that the functor $\kappa_S: 
\cm \to \cm$ defines a right adjoint to the inclusion functor $\cm_S \to \cm$, but this claim is false in this generality. Indeed, the values of $\kappa_S$ are not $S$-generated objects 
in general. 

The main purpose of considering the full subcategory $\cm_S$ for the proof of the main result in \cite[Theorem 1.1]{Ra} was in order to obtain a coreflective subcategory of $\cm$ which contains $S$ and has a small dense subcategory, cf. \cite[Proposition 3.1(iii)]{Ra}. But the existence of such a coreflective category cannot be guaranteed in general, 
therefore also the proof of \cite[Theorem 1.1]{Ra} does not apply without additional assumptions. 

Here is a general construction of counterexamples to these claims.

\begin{example1}
Let $\cm$ be a cocomplete category and $S$ a set of objects such that the colimit-closure of $S$ in $\cm$ is $\cm$, obtained possibly after using iterated colimits, but $\cm$ does 
not have 
a small dense subcategory. Such examples exist, e.g., the category of compact Hausdorff spaces is the colimit-closure of the one-point space, but it does not have a small dense 
subcategory. Then $\cm_S$ cannot be $\cm$, because $\cm_S$ has a small dense subcategory, but then it cannot be closed under colimits in $\cm$. Therefore, $\cm_S$ is not coreflective 
in $\cm$. 
\end{example1}

Under the special assumption that such a coreflection exists, the proof of \cite[Theorem 1.1]{Ra} applies to give the following corrected statement. We say that a model
category $\cm$ is \emph{setwise cofibrantly generated} if both of its weak factorization systems are setwise cofibrantly generated.

\begin{theorem1}\label{correction}
Let $\cm$ be a setwise cofibrantly generated model category and let $S$ be the set of domains and codomains of the morphisms in the sets of generators for fibrations and trivial 
fibrations. Suppose that the inclusion functor $\cm_S \to \cm$ admits a right adjoint. Assuming Vop\v{e}nka's principle, then: 
\begin{itemize}
\item[(1)] $\cm_S$ is locally presentable.
\item[(2)] $\cm_S$ carries a cofibrantly generated model structure, inherited from $\cm$, and the inclusion functor $\cm_S \to \cm$ is a Quillen equivalence.
\end{itemize}
\end{theorem1}
\begin{proof}
The proof follows the proof of \cite[Theorem 1.1]{Ra}. $\cm_S$ is a cocomplete category with a small dense subcategory spanned by $S$. Assuming Vop\v{e}nka's principle, $\cm_S$ is locally presentable by 
\cite[Theorem 6.14]{AR}. 

The setwise cofibrantly generated model structure on $\cm$ restricts to a cofibrantly generated model structure on $\cm_S$ where the factorizations are given by the small object argument 
performed in $\cm_S$. 

Let $G: \cm \to 
\cm_S$ denote the right adjoint. It is easy to see that the counit morphism $G(X) \to X$ is a trivial fibration for all $X \in \cm$. In particular, $G$ preserves the weak equivalences. Then 
it follows easily that the inclusion $\cm_S \to \cm$ is a left Quillen equivalence.
\end{proof}

\begin{remark1}
If the generators of the weak factorization systems in $\cm$ permit the small object argument in $\cm_S$, then (2) holds without the assumption of Vop\v{e}nka's principle.
\end{remark1}

The special assumption of Theorem \ref{correction} is satisfied for topological fiber-small categories, see \cite{FR}. Moreover, in this case, 
Vop\v{e}nka's principle is not needed. It would be interesting to identify also other classes of model categories for which the special assumption of Theorem \ref{correction} is satisfied. 

\begin{corollary1}
Let $\cm$ be a setwise cofibrantly generated model category whose underlying category is a topological fiber-small category. Then $\cm$ is Quillen 
equivalent to a combinatorial model category. 
\end{corollary1}
\begin{proof}
Let $\ci$ be a generator for trivial fibrations and $\cj$ a generator for fibrations. Let $S$ be the set of objects that appear as domains 
or codomains of the morphisms in $\ci\cup\cj$. The full subcategory $\cm_S$ of $\cm$ consisting of $S$-generated objects is locally 
presentable and coreflective in $\cm$ \cite[Proposition 3.5 and Theorem 3.6]{FR}. Similarly to Theorem \ref{correction}, $\cm_S$ inherits a cofibrantly generated 
model structure from $\cm$ and the adjunction $\cm_S \rightleftarrows \cm$ is a Quillen equivalence.
\end{proof}

\end{document}